\newtheorem{theorem}{Theorem}[section]
\newtheorem{proposition}[theorem]{Proposition}
\newtheorem{corollary}[theorem]{Corollary}
\newtheorem{lemma}[theorem]{Lemma}
\newtheorem{definition}[theorem]{Definition}
\newtheorem{example}[theorem]{Example}
\newtheorem*{theoremV}{Voronin's theorem}
\newtheorem*{theoremA}{Theorem A}
\theoremstyle{remark}
\newtheorem{question}[theorem]{Question}
\newcommand{\veps}{\varepsilon}
\newcommand{\CC}{\mathbb C}
\newcommand{\RR}{\mathbb R}
\newcommand{\NN}{\mathbb N}
\newcommand{\ZZ}{\mathbb Z}
\newcommand{\dwa}{{\mathcal D}_{\mathrm{w.a.}}}
\newcommand{\ldens}{\underline{\mathrm{dens}}}
\newcommand{\ddens}{{\mathcal D}_{\mathrm{dens}}}
\DeclareMathOperator{\sign}{sign}
\author{Frédéric Bayart}
\address{Laboratoire de Math\'ematiques Blaise Pascal UMR 6620 CNRS, Universit\'e Clermont Auvergne, Campus universitaire des C\'ezeaux, 3 place Vasarely, 63178 Aubi\`ere Cedex, France.}
\email{frederic.bayart@uca.fr}
\author{Athanasios Kouroupis}
\address{Department of Mathematical Sciences, Norwegian University of Science and Technology (NTNU), 7491 Trondheim, Norway}
\email{athanasios.kouroupis@ntnu.no}
\title[Universality]{On universality of general Dirichlet series}
\date{}
\begin{document}
	\begin{abstract}
		In the present work, we establish sufficient conditions for a Dirichlet series induced by general frequencies to be universal with respect to vertical translations. Our results can be applied to known universal objects such as Hurwitz zeta functions and also can provide new examples of universal Dirichlet series including the alternating prime zeta function $\sum_{n\geq1}(-1)^np_n^{-s}$.
	\end{abstract}
	\maketitle
	
	\section{Introduction}
	
	The study of the universal properties of Dirichlet series goes back to 1975 with the seminal work of Voronin on the Riemann zeta function \cite{VORONIN}. Voronin's theorem says:
	
	\begin{theoremV}
		Let $K$ be a compact subset of $\{1/2<\Re e(s)<1\}$ with connected complement, let $f$ be a nonvanishing function continuous on $K$ and holomorphic in the interior of $K$. Then 
		$$\ldens\left\{\tau\geq 0:\ \sup_{s\in K}|\zeta(s+i\tau)-f(s)|<\veps\right\}>0$$
		where $\ldens(A)$ denotes the lower density of $A\subset\RR_+$, that is
		\[\ldens(A)=\liminf\limits_{T\rightarrow\infty}\frac{1}{T}\int\limits_{0}^{T}\mathbf{1}_A(t)\,dt.\]
	\end{theoremV}
	
	We will give sufficient conditions for a Dirichlet series to approximate similarly to the Riemann zeta function every holomorphic function on suitable domains. Recall that a Dirichlet series is a function of the form:
	$$D(s)=\sum_n a_n e^{-\lambda_n s},$$
	where $(a_n)\subset\CC^{\mathbb N}$ and $(\lambda_n)$ is a frequency, namely an increasing sequence of nonnegative real numbers tending to $+\infty$. The case $(\lambda_n)=(\log n)$ corresponds to ordinary Dirichlet series. 
	
	Let us introduce the following definitions: let $\Omega_1\subset\Omega\subset\mathbb C$ be two domains such that $\Omega_1+i\tau\subset\Omega_1$ for all $\tau>0$, and, for all compact sets $K\subset\Omega,$ there exists $\tau>0$ with $K+i\tau\subset\Omega_1$. Let $D:\Omega_1\to\mathbb C$ be holomorphic.
	We say that $D$ is {\bf universal} in $\Omega$ if for all compact subsets $K$ of $\Omega$ with connected complement,
	for all nonvanishing functions $f:K\to\Omega,$ continuous on $K$ and holomorphic in the interior of $K,$
	$$\ldens\left\{\tau\geq 0:\ \sup_{s\in K}| D(s+i\tau)-f(s)|<\veps\right\}>0.$$
	We say that $D$ is {\bf strongly universal} if the restriction that $f$ is nonvanishing can be eased. Equivalently, a Dirichlet series is strongly universal if it can approximate locally uniformly, via vertical translations, every complex polynomial in $\Omega$. 
	
	Since Voronin's work, the area of universality gained popularity. Many authors studied aspects of (strong) universality for various classes of Dirichlet series. The survey paper \cite{Matsu15} provides a thorough examination of the subject up to 2015.
	
	The first author in \cite{Bayrearrangement} improving the work of \cite{LSS03} on strong universality of general Dirichlet series obtained the following result.
	
	\begin{theoremA}
		Let $P\in\mathbb R[X]$ with $\deg(P)=d\geq 1$ and $\lim_{+\infty}P=+\infty$, let $Q\in\mathbb R[X]$ with $\deg(Q)=d-1$,
		let $\omega\in\mathbb R\backslash 2\pi\ZZ$ and let $\kappa\in\mathbb R$. 
		Assume moreover that the sequence $(\log(P(n))_{n\geq 1}$ is $\mathbb Q$-linearly independent. Then the Dirichlet series $D(s)=\sum_{n\geq 1}Q(n)(\log n)^\kappa e^{i\omega n}(P(n))^{-s}$
		is strongly universal in $\{(2d-1)/2d<\Re e(s)<1\}$.  
	\end{theoremA}
	
	This generalizes the case of the Lerch zeta function (see \cite{GaLau02}) when $Q(n)=1,$ $\kappa=0$ and
	$P(n)=n+\alpha$ with $\alpha$ transcendental.
	
	\medskip
	
	To prove Theorem A, a key step is to evaluate the square moments of $D$. This uses classical techniques of harmonic analysis like the method of non-stationary phase (see the Van der Corput type lemma \ref{lem:vandercorput}) as well as a Hilbert type inequality (see Lemma \ref{lem:MV}). These tools require some regularity assumptions on the sequence $(\lambda_n)$ and one cannot apply them to the simplest frequency of $\mathbb Q-$linearly independent numbers, namely $(\log(p_n))$ where $(p_n)$ is the increasing sequence of prime numbers. 
	
	It seems that in the recent literature, an old result of Landau \cite{Landau09}, which gives under mild conditions an estimate of the square moments of a convergent Dirichlet series, has been forgotten. Using this result, we are able to obtain, with an elementary proof, the following far-reaching extension of Theorem A (the definitions of the abscissas of convergence of a Dirichlet series are given in Subsection \ref{sec:abscissas}).
	
	%One of the objectives of this article is to study potential cases of universal Dirichlet series for which the methods of 
	%\cite{Bayrearrangement} are not applicable, see for example \cite[Question~6.8]{Bayrearrangement}. The first one concerns the frequencies $(\lambda_n)$.
	%The simplest example of a frequency $(\lambda_n)$ such that $(\log(\lambda_n))$ is $\mathbb Q$-linearly independent 
	%is probably the sequence $(p_n)$ of prime numbers. However, this sequence is not regular enough
	%to be handled by the methods of \cite{Bayrearrangement}. The main barrier for this problem is that the sequence of primes is not regular enough to estimate its partial sums using the classical 
	%techniques from harmonic analysis like the method of non-stationary phase/ Van der Corput type lemmas \ref{lem:vandercorput} or decoupling \cite{BOU17}. 
	%We give a partial answer to this question.
	%We are able to prove strong universality for Dirichlet series with type of frequencies and with similar (and even slightly relaxed) conditions on the coefficients.
	%However, the universal property is only shown on a smaller strip.
	
	\begin{theorem}\label{thm:main}
		Let $(\lambda_n)$ be a frequency, let $D(s)=\sum_{n\geq 1}a_n e^{-\lambda_n s}$ be a Dirichlet series. Assume that the frequency $(\lambda_n)$ is $\mathbb Q$-linearly independent, satisfies (WLC) and that for all $\alpha,\,\beta>0,$ there exist $C>0$ and $x_0\geq 1$ such that, for all $x\geq x_0,$ 
		\begin{equation}\label{eq:thmmain}
		\sum_{\lambda_n \in \left[x,x+\frac\alpha{x^2}\right]}|a_n |\geq Ce^{(\sigma_a(D)-\beta)x}.
		\end{equation}
		Then $D$ is strongly universal in $\{(\sigma_c(D)+\sigma_a(D))/2<\Re e(s)<\sigma_a(D)\}$.
	\end{theorem}
	
	We postpone to Section \ref{sec:landau} the definition of the condition (WLC). We just mention that it is a weak property of separation of the sequence $(\lambda_n)$ which is satisfied if $\lambda_n=\log(P(n))$ or $\lambda_n=\log(P(p_n))$ where $P$ is a polynomial satisfying $\lim_{+\infty}P=+\infty$.
	
	Of course, the property \eqref{eq:thmmain} may seem unclear and hard to testify. But, will allow us to provide a plethora of examples of strongly universal Dirichlet series.
	\begin{corollary}\label{exten}
		The Dirichlet series $D(s)=\sum_{n\geq 1}a_n[P(n)]^{-s}$ is strongly universal in the strip $\{(\sigma_c(D)+1)/2<\Re e(s)<1\},$ assuming the following:
		\begin{itemize}
			\item $P$ is a real polynomial of degree  $d\geq 1$ and $\lim_{+\infty}P=+\infty$.
			\item The sequence $(a_n)$ satisfies
			$$\lim_{n\to+\infty}\frac{\log |a_n|}{\log n}=d-1.$$
			\item The frequency $(\log(P(n))$ is $\mathbb Q$-linearly independent.
		\end{itemize}
	\end{corollary}
	
	\begin{corollary}\label{exprime}
		The Dirichlet series $D(s)=\sum_{n\geq 1}a_n[P(p_n)]^{-s}$ is strongly universal in the strip $\{(\sigma_c(D)+1)/2<\Re e(s)<1\},$ assuming the following:
		\begin{itemize}
			\item $P$ is a real polynomial of degree  $d\geq 1$ and $\lim_{+\infty}P=+\infty$.
			\item The sequence $(a_n)$ satisfies
			$$\lim_{n\to+\infty}\frac{\log |a_n|}{\log n}=d-1.$$
			\item The frequency $(\log(P(p_n))$ is $\mathbb Q$-linearly independent.
		\end{itemize}
	\end{corollary}
	As a corollary, we give a positive answer to the Question~6.8 posed in \cite{Bayrearrangement}. 
	
	\begin{corollary}
		The Dirichlet series $\sum_{n\geq 1} (-1)^n p_n^{-s}$ is strongly universal in the critical strip $\{\frac 12<\Re e(s)<1\}.$
	\end{corollary}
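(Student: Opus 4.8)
The plan is to obtain the corollary as a direct application of Theorem~\ref{thm:main} with $d=1$ and $P(X)=X$. For this choice $\lambda_n=\log\big(P(p_n)\big)=\log p_n$, so $e^{-\lambda_n s}=p_n^{-s}$, and with $a_n=(-1)^n$ the series of the theorem is precisely $D(s)=\sum_{n\ge1}(-1)^n p_n^{-s}$; specializing the conclusion to $d=1$ gives the strip $\{2/3<\Re e(s)<1\}$. It therefore suffices to verify the four hypotheses.

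Three of them are essentially free. The polynomial $P(X)=X$ has degree $1$ and $\lim_{+\infty}P=+\infty$, so the first bullet holds. Since $d=1$ we have $n^{d-1}=1=|a_n|$ for all $n\ge2$, so the two-sided growth bound is satisfied with, for instance, $C=2$ and $\kappa=1$. Finally, the $\QQ$-linear independence of $(\log p_n)_{n\ge1}$ is just unique factorization in $\ZZ$: a relation $\sum_i q_i\log p_{n_i}=0$ with $q_i\in\QQ$ can be cleared of denominators to yield integers $m_i$ with $\prod_i p_{n_i}^{m_i}=1$, whence all $m_i=0$.

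The one bullet that calls for a short argument is $\sigma_c(D)=0$. At $s=0$ the series becomes $\sum_{n\ge1}(-1)^n$, which diverges, so $\sigma_c(D)\ge0$. For the reverse inequality, the partial sums $A(N)=\sum_{n=1}^N(-1)^n$ are bounded by $1$, and Abel summation together with the estimate $\big|p_n^{-s}-p_{n+1}^{-s}\big|\le|s|\int_{\log p_n}^{\log p_{n+1}}e^{-u\Re e(s)}\,du$ shows that the tails $\sum_{n\ge M}(-1)^n p_n^{-s}$ are $O\big(p_M^{-\Re e(s)}\big)$ for $\Re e(s)>0$; hence the series converges there and $\sigma_c(D)\le0$. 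So $\sigma_c(D)=0$, all four hypotheses of Theorem~\ref{thm:main} hold, and the corollary follows. In truth there is no real obstacle at this stage: all the difficulty is concentrated in Theorem~\ref{thm:main} itself, whose proof must control the exponential sums attached to the irregular sequence $(\log p_n)$ without the smoothness that classical harmonic-analytic methods require.
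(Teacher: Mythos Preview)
Your proof is correct and follows the same approach as the paper, which states the corollary immediately after Theorem~\ref{thm:main} without a separate proof: it is meant to be read as the case $d=1$, $P(X)=X$, $a_n=(-1)^n$. You have simply spelled out the routine verifications of the four hypotheses that the paper leaves implicit, including the Abel-summation argument for $\sigma_c(D)=0$.
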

	
	Observe that for the examples coming from \cite{Bayrearrangement} or from Theorem \ref{thm:main}, the Dirichlet series itself converges in its strip of universality.
	This does not cover the case of the Riemann zeta function or that of the Hurwitz zeta functions $\sum_n (n+\alpha)^{-s}$, $\alpha$ transcendental,
	which have a pole at $1$ and are known to be universal in $\{\frac 12<\Re e(s)<1\}$. We extend those results to a large class of general Dirichlet series,
	even if $1$ is a branching point and not a pole.
	
	In what follows we denote by $\mathbb C_{\sigma}$ the half-plane $\{\Re e(s)>\sigma\}$
	and by $\mathbb C_\sigma^+$ its restriction to the complex numbers of positive imaginary part, $\{s\in\mathbb C:\ \Re e(s)>\sigma,\ \Im m(s)>0\}$.
	
	\begin{theorem}\label{thm:main2}
		Let $P,\, Q\in\mathbb R[X]$ be polynomials of degree $d\geq 1$ and $d-1$, respectively. If $\lim_{+\infty}P=+\infty$ and the sequence $(\log(P(n)))$ is $\mathbb Q$-linearly independent, then the Dirichlet series 
		$$D(s)=\sum_n Q(n)(\log(n))^\kappa [P(n)]^{-s},\qquad\kappa\in\mathbb R$$ 
		admits a holomorphic continuation to $\mathbb C_{1-\frac 1d}^+\cup\CC_1$ 
		and even to $\mathbb C_{1-\frac 1d}\backslash\{1\}$ if $\kappa$ is a nonneggative integer. 
		Moreover, it is \textbf{strongly universal} in the strip $\left\{(2d-1)/d<\Re e(s)<1\right\}.$
	\end{theorem}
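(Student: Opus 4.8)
The plan is to obtain the holomorphic continuation by a soft argument and then to deduce strong universality from the general sufficient condition for universality established above, the only non-routine analytic input being a second-moment estimate for $D$ on vertical lines of the strip. First I would establish the continuation. Write $P(X)=a_dX^d\bigl(1+R(X)\bigr)$ with $a_d>0$ and $R$ rational, $R(x)=O(1/x)$ at $+\infty$. For $\Re e(s)>1$, expanding $[P(n)]^{-s}=a_d^{-s}n^{-ds}(1+R(n))^{-s}$ by the binomial series to order one and sorting the resulting terms by the power of $n$ they carry, one obtains
\[
D(s)=b_{d-1}(s)\,g_\kappa\bigl(ds-(d-1)\bigr)+b_{d-2}(s)\,g_\kappa\bigl(ds-(d-2)\bigr)+E(s),
\]
where $b_{d-1},b_{d-2}$ are entire, $g_\kappa(z):=\sum_{n\ge2}(\log n)^\kappa n^{-z}$, and $E$ is holomorphic in $\CC_{1-2/d}\supset\CC_{1-1/d}$, given there by a normally convergent series (terms carrying $n^{m}$ with $m\le d-3$ go into $E$, their natural singularity $s=(m+1)/d$ lying left of $\CC_{1-1/d}$). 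It then suffices to continue $g_\kappa$; the substitution $t=\log x$ gives $\int_2^{\infty}(\log x)^\kappa x^{-z}\,dx=(z-1)^{-\kappa-1}\Gamma\bigl(\kappa+1,(z-1)\log2\bigr)$, and iterating Euler--Maclaurin on the difference between $g_\kappa(z)$ and this integral continues $g_\kappa$ to $\CC\setminus(-\infty,1]$ in general --- the prefactor $(z-1)^{-\kappa-1}$ forcing a branch point at $z=1$ --- and to all of $\CC$ with a pole of order $\kappa+1$ at $z=1$ when $\kappa\in\NN_0$. Pulling back through $z=ds-(d-1)$ and $z=ds-(d-2)$, the only singularities of $D$ in $\CC_{1-1/d}$ are at $s=1$ and $s=1-1/d$, and choosing the branch cut of $g_\kappa$ so that it avoids both $\CC_1$ and the open upper half-plane yields the continuation to $\CC_{1-1/d}^+\cup\CC_1$, resp.\ to $\CC_{1-1/d}\setminus\{1\}$ when $\kappa\in\NN_0$.

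Next I would verify the hypotheses of the universality criterion on $\{\tfrac{2d-1}{2d}<\Re e(s)<1\}$. The $\QQ$-linear independence of $(\lambda_n)=(\log P(n))$ is assumed; as $\deg Q=d-1$, $Q$ has finitely many real zeros, so $|a_n|=|Q(n)|(\log n)^\kappa\asymp n^{d-1}(\log n)^\kappa$ for large $n$, while $e^{-\lambda_n}=P(n)^{-1}\asymp n^{-d}$. Hence $\sum_n|a_n|e^{-\sigma\lambda_n}=\infty$ for every $\sigma<1$, whereas $\sum_n|a_n|^2e^{-2\sigma\lambda_n}<\infty$ precisely for $\sigma>\tfrac{2d-1}{2d}$. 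The crucial quantitative input is the Carlson-type mean value: for $\sigma\in(\tfrac{2d-1}{2d},1)$, $\frac1T\int_0^T\bigl|D(\sigma+it)-\sum_{n\le N}a_ne^{-\lambda_n(\sigma+it)}\bigr|^2dt\longrightarrow\sum_{n>N}|a_n|^2e^{-2\sigma\lambda_n}$ as $T\to\infty$, and in particular $\limsup_{T\to\infty}\frac1T\int_0^T|D(\sigma+it)|^2dt<\infty$. This follows from the decomposition above: $E$ is normally convergent on the strip, and the classical second-moment theorem for $\zeta$ and its derivatives on vertical lines of real part $\ge\tfrac12+\delta$ applies to the two terms $g_\kappa(ds-(d-1))$ and $g_\kappa(ds-(d-2))$ exactly when $d\sigma-(d-1)>\tfrac12$, i.e.\ when $\sigma>\tfrac{2d-1}{2d}$. (Since the frequencies $\log P(n)$ are regular, one could also derive this directly via van der Corput's lemma~\ref{lem:vandercorput}, unavailable in the prime setting of Theorem~\ref{thm:main}.)

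With these in hand the criterion assembles strong universality along the standard scheme. The Carlson-type estimate yields $\frac1T\int_0^T\delta_{D(\cdot+i\tau)}\,d\tau\to\mu$ weakly, where $\mu$ is the law, on the space of holomorphic functions on the strip, of the random Dirichlet series $\sum_na_n\omega_ne^{-\lambda_ns}$ --- which converges almost surely, uniformly on compacta of the strip, thanks to $\sum|a_n|^2e^{-2\sigma\lambda_n}<\infty$. The divergence $\sum_n|a_n|e^{-\sigma\lambda_n}=\infty$, combined with the rearrangement theorem of the paper (in the spirit of \cite{Bayrearrangement}), identifies $\supp\mu$ with the full space of holomorphic functions on the strip; here the absence of any multiplicative relation among the $e^{-\lambda_ns}$ is precisely what makes the support the whole space, so the non-vanishing hypothesis on $f$ becomes superfluous and one gets \emph{strong} universality. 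Finally, the ergodicity of the flow $(\omega_n)_n\mapsto(\omega_ne^{-i\lambda_n\tau})_n$ on $\ttinf$ --- equivalent, via Kronecker's theorem, to the $\QQ$-linear independence of $(\lambda_n)$ --- pins the limiting measure down to $\mu$ and, through Birkhoff's ergodic theorem, converts $\mu(B)>0$ for a ball $B$ around the target into $\ldens\{\tau\ge0:\ \sup_{s\in K}|D(s+i\tau)-f(s)|<\veps\}>0$; Mergelyan's theorem and the connectedness of $K^c$ are what bring a general $f$ into this framework at the start.

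The hard part will be that $D$ does \emph{not} converge on the strip, so the second-moment estimate must be established for the \emph{analytic continuation} of $D$: the decomposition into $\zeta$-derivative pieces plus a normally convergent remainder is exactly what makes this feasible, and the endpoint $\tfrac{2d-1}{2d}$ is forced by the requirement that the real part be $\ge\tfrac12$ in the classical mean value theorem for $\zeta$ after the substitution $z=ds-(d-1)$. A minor technical nuisance is the bookkeeping of the branch cut of $g_\kappa$ when $\kappa\notin\NN_0$, needed to pin the continuation region down to exactly $\CC_{1-1/d}^+\cup\CC_1$.
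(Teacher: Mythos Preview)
Your route to the holomorphic continuation is correct and different from the paper's. The paper applies Euler's summation formula directly to $D$ (Lemma~\ref{lem:analyticcontinuation}), writes $Q=cP'+Q_1$ with $\deg Q_1\le d-2$, changes variables $u\mapsto P(u)$ in the resulting integral, and lands on the incomplete Gamma function that way; your binomial expansion instead reduces everything to the single model $g_\kappa(z)=\sum_{n\ge2}(\log n)^\kappa n^{-z}$, which is a pleasant conceptual simplification.

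The genuine gap is in the second-moment step. The coefficients in your decomposition are not bounded on vertical lines: with $P(X)=a_dX^d+a_{d-1}X^{d-1}+\cdots$ and $Q(X)=q_{d-1}X^{d-1}+\cdots$ one finds
\[
b_{d-2}(s)=a_d^{-s}\Bigl(q_{d-2}-s\,\tfrac{a_{d-1}}{a_d}\,q_{d-1}\Bigr),
\]
linear in $|t|$ whenever $a_{d-1}q_{d-1}\neq0$, and the implied constant in $E$ (coming from $\binom{-s}{2}$ and higher binomial terms) grows like $|t|^2$. ``Normally convergent'' only means uniform on compacta, not bounded on an unbounded vertical strip, so the triangle inequality applied to your splitting yields at best $\int_0^T|D(\sigma+it)|^2\,dt\lesssim T^3$, not $\lesssim T$. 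The paper never passes through an $s$-dependent decomposition: Lemma~\ref{lem:analyticcontinuation}(b) uses van der Corput (precisely your parenthetical aside, Lemma~\ref{lem:vandercorput}) to prove the approximate functional equation
\[
D(s)=\sum_{n\le x}Q(n)(\log n)^\kappa P(n)^{-s}+O(x^{-\veps})+O\!\left(\frac{(\log P(x))^\kappa}{(s-1)P(x)^{s-1}}\right),\qquad 1\le t\le\delta x,
\]
with $O$-constants independent of $s$; Proposition~\ref{prop:pole1} then takes $x=T/\delta$ and feeds the finite partial sum directly into the Montgomery--Vaughan inequality, which gives $\lesssim T$ exactly when $\sigma>\tfrac{2d-1}{2d}$. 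So the van der Corput step you relegate to a side remark is in fact the load-bearing argument here.

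A smaller omission: Theorem~\ref{thm:bayrearrangement} requires $D\in\ddens$ (Definition~\ref{def:ddens}), a quantitative short-interval density condition on the frequencies, not merely the divergence $\sum_n|a_n|e^{-\sigma\lambda_n}=\infty$; the paper verifies this separately in Proposition~\ref{prop:pole2}.
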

	
	\subsection*{Organisation of the paper}
	\begin{itemize}
		\item In Section \ref{sec:preliminaries} we go through some preliminary results and definitions.
		\item In Section \ref{sec:landau}, which is mostly expository, we give a complete account on Landau's theorem, under slightly more general assumptions. As an application, we provide an elementary proof for the finiteness of the square moments of the zeta function in the critical strip. 
		\item In Section \ref{sec:examples} we give several examples of strongly universal Dirichlet series and the proof of Theorem \ref{thm:main} and its corollaries.
		\item  In Section \ref{5} we study universal Dirichlet series with a singularity (or a branching point).
		\item  In the last Section \ref{sec:randommodels}, we discuss the expected universal properties of a class of random Dirichlet series.
	\end{itemize}

	\subsection*{Notation}
	Throughout the paper, if $f,g : E\to \mathbb R$ are two functions defined on the same set $E$, the notation $f\lesssim g$ will mean that there exists some constant $C > 0$ such that $f \leq Cg$ on $E$.
	\subsection*{Funding}
	A. Kouroupis is partially supported by the Onassis Foundation - Scholarship ID: F ZT 037-1/2023-2024.
	
	\section{Preliminaries}\label{sec:preliminaries}
	
	\subsection{Abscissas of convergence} \label{sec:abscissas}
	
	To a Dirichlet series $D=\sum_{n=1}^{+\infty} a_n e^{-\lambda_n s}$ we will associate three abscissas, its abscissa of convergence, 
	$$\sigma_c(D):=\inf\left\{\Re e(s):\ \sum_n a_n e^{-\lambda_n s}\textrm{ converges}\right\},$$
	its abscissa of absolute convergence 
	$$\sigma_a(D):=\inf\left\{\sigma\in\RR:\ \sum_n |a_n|e^{-\lambda_n\sigma}\textrm{ converges}\right\},$$
	and also
	$$\sigma_2(D):=\inf\left\{\sigma\in\RR:\ \sum_n |a_n|^2e^{-2\lambda_n\sigma}\textrm{ converges}\right\}.$$
	It is well-known that $D=\sum_n a_ne^{-\lambda_n s}$ converges in the half-plane $\CC_{\sigma_c(D)}$  and that it defines a holomorphic function there. It is also straightforward to check that $\sigma_2(D)\leq (\sigma_c(D)+\sigma_a(D))/2.$
	
	In what follows, we will assume that a Dirichlet series $D=\sum_n a_ne^{-\lambda_n s}$ has finite abscissa of convergence.
	
	\subsection{How to prove universality}
	
	Let us introduce two definitions from \cite{Bayrearrangement}.
	
	\begin{definition}\label{def:dwa}
		Let $\sigma_0\in\mathbb R$. We say that a Dirichlet series $D(s)=\sum_{n}a_ne^{-\lambda_n s}$  belongs to the class $\dwa(\sigma_0),\, \sigma_0\geq \sigma_2(D)$, provided:
		\begin{itemize}
			\item[(a)] It extends holomorphically to $\mathbb C_{\sigma_0}^+\cup\CC_{\sigma_c(D)}$.
			
			\item[(b)] For all $\sigma_2>\sigma_1>\sigma_0$, 
			$$\sup_{\sigma\in[\sigma_1,\sigma_2]}\sup_{T>0}\frac 1T\int_1^T |D(\sigma+it)|^2dt<+\infty.$$
			\item[(c)] The sequence $(\lambda_n)$ is $\mathbb Q$-linearly independent. 
		\end{itemize}
	\end{definition}
	Note that for a function $D$ as above, the submean value property implies that it is of order $1/2$ uniformly in $\{\Re e(s)\geq\sigma_1\}$ for all $\sigma_1>\sigma_0$, i.e,  there exist $C,\,t_0>0$ such that
	\[|D(\sigma+it)|\leq C t^{\frac{1}{2}}, \qquad\sigma\geq \sigma_1,\qquad t\geq t_0.\] 
	
	Hadamard three-lines theorem (see \cite{TIT58}) implies that the order is a strictly decreasing function of $\sigma\in(\sigma_c(D),\sigma_a(D))$. 
	If we further assume that $D$ has a holomorphic extension to $\mathbb{C}_{\sigma_0}$, then $\sigma_0\geq \sigma_c(D)$ (see \cite{TIT58}).
	
	\begin{definition}\label{def:ddens}
		We say that a Dirichlet series $D=\sum_n a_n e^{-\lambda_n s}$ belongs to $\ddens$ provided  for all $\alpha,\,\beta>0$, there exist $C>0$ and $x_0\geq 1$ such that, for all $x\geq x_0$, 
		$$\sum_{\lambda_n \in \left[x,x+\frac\alpha{x^2}\right]}|a_n |\geq Ce^{(\sigma_a(D)-\beta)x}.$$
	\end{definition}
	
	The main interest of introducing these definitions is the following theorem (see \cite{Bayrearrangement}).
	
	\begin{theorem}\label{thm:bayrearrangement}
		Let $D$ be a Dirichlet series and let $\sigma_0>\sigma_2(D).$ Assume that $D\in\dwa(\sigma_0)\cap\ddens$.
		Then $D$ is strongly universal in the strip $\{\sigma_0<\Re e (s)<\sigma_a(D)\}$. 
	\end{theorem}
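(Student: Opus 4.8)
The strategy is to combine two classical pillars of the theory of universal Dirichlet series: an ergodic/density argument on the infinite torus $\ttinf$, and a Birkhoff-type approximation in mean square. I would realize $D$ as (the boundary behavior of) a function on $\ttinf$. Writing $\lambda_n=\sum_k c_{n,k}\beta_k$ for a $\QQ$-basis $(\beta_k)$ of the $\QQ$-span of the $(\lambda_n)$ (hypothesis (5) of Definition~\ref{def:dwa} guarantees we may in fact take $\beta_k=\lambda_k$, i.e. each $\lambda_n$ is its own basis element after reindexing, but the general set-up is cleaner), one associates to each $\omega\in\ttinf$ the "vertical limit" series $D_\omega(s)=\sum_n a_n \chi_\omega(n) e^{-\lambda_n s}$ where $\chi_\omega(n)=e^{2\pi i\langle c_n,\omega\rangle}$. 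Conditions (2)--(4) of $\dwa(\sigma_0)$ ensure, via a square-function/Carleson-type argument (Helson's theorem on a.e. convergence, or the Montgomery--Vaughan mean-value estimate on $\ttinf$), that for almost every $\omega$ the series $D_\omega$ converges on $\CC_{\sigma_0}$ and that the family $\{D(\cdot+i\tau)\}_{\tau}$ has the same closure in $H(\CC_{\sigma_0})$ as $\{D_\omega\}_{\omega\in\ttinf}$; condition (3) (polynomial growth) together with condition (4) ($L^2$-boundedness of vertical means) is exactly what is needed to run the Birkhoff ergodic theorem / Bohr--Jessen machinery and transfer a statement about the $\omega$-family to one about vertical translates with positive lower density.

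The heart of the matter is then a \emph{denseness} statement: for any compact $K\subset\{\sigma_0<\Re e(s)<\sigma_a(D)\}$ with connected complement and any target $f$ continuous on $K$, holomorphic in its interior (no nonvanishing hypothesis — this is the "strong" part), there is some $\omega$ with $\sup_{s\in K}|D_\omega(s)-f(s)|$ small. By Mergelyan's theorem it suffices to approximate $f$ by a finite Dirichlet polynomial, so the task reduces to showing that finite sums $\sum_{n\le N} a_n\chi_\omega(n)e^{-\lambda_n s}$, as $\omega$ ranges over $\ttinf$, are dense in $H(K)$. This is where the rearrangement idea and the hypothesis $D\in\ddens$ enter. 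The Kronecker/Weyl equidistribution theorem (using $\QQ$-linear independence once more) lets us choose the phases $\chi_\omega(n)$ essentially freely on any finite block of indices; the difficulty is that to approximate an \emph{arbitrary} holomorphic $f$ — in particular one with zeros — one cannot simply use a character to rotate a fixed polynomial, one must \emph{build} the approximating polynomial out of the given coefficients $a_n$. This is done by a greedy/rearrangement selection: one shows that using only indices $n$ with $\lambda_n$ in a short window $[x,x+\alpha/x^2]$, the available "mass" $\sum |a_n|$ is, by Definition~\ref{def:ddens}, at least $c\,e^{(\sigma_a(D)-\beta)x}$, which is exactly enough to realize, after multiplying by suitable characters, any prescribed value of a Dirichlet polynomial of the form $e^{-xs}\cdot(\text{bounded})$ on $K$ — here one crucially uses that on $K$ the exponentials $e^{-\lambda_n s}$ with $\lambda_n\approx x$ are nearly constant (their logarithmic derivative is $O(1/x^2\cdot \text{diam}\,K)$ small), so a whole window behaves like a single adjustable complex degree of freedom of size $e^{-x\Re e(s)}$ relative to $e^{-x\sigma_a(D)}$, i.e. of size $e^{x(\sigma_a(D)-\Re e(s))}\to\infty$. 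Summing a telescoping sequence of such windows $x\to\infty$ and invoking a Hilbert-space/duality lemma (the classical "if the partial sums of $\sum \|v_n\|$ diverge and the directions equidistribute then finite signed sums are dense" statement, as in Bagchi's and the first author's work) yields denseness of the polynomials in $H(K)$, hence in $H(K)$ for the topology of uniform convergence on $K$ by Mergelyan, hence approximation of $f$.

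Finally one assembles the pieces. Fix $K$, $f$ and $\veps>0$. By the denseness step pick $\omega_0$ with $\sup_K|D_{\omega_0}-f|<\veps/2$. By continuity of $\omega\mapsto D_\omega$ in the relevant topology (uniform on compacts, valid on $\CC_{\sigma_0}$ by conditions (2),(4)) there is an open neighborhood $U\ni\omega_0$ in $\ttinf$ on which $\sup_K|D_\omega-f|<\veps$. The set $\{\tau\ge 0:\ \sup_K|D(\cdot+i\tau)-f|<\veps\}$ contains $\{\tau\ge 0:\ (\tau\lambda_k/2\pi)_k\bmod 1\in U\}$ up to the (measure-negligible, by conditions (3)--(4) and a standard approximation-of-$D$-by-$D_\omega$-near-the-boundary lemma) discrepancy between $D(\cdot+i\tau)$ and the rotation $D_{\tau\omega}$; and the latter set has positive lower density by the Weyl equidistribution theorem (the flow $\tau\mapsto (\tau\lambda_k/2\pi)_k$ is, by $\QQ$-linear independence, uniquely ergodic on $\ttinf$, so it spends a definite proportion of time in any nonempty open set). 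The main obstacle, and the step that carries the real content beyond soft ergodic theory, is the rearrangement/denseness argument of the second paragraph — controlling the error when one replaces the honest partial sums $\sum_{\lambda_n\le x}a_n\chi_\omega(n)e^{-\lambda_n s}$ by the window-by-window reconstruction, i.e. showing the "tail" and "cross-window" contributions stay small uniformly on $K$; this is precisely where the three mean-value/growth conditions (2),(3),(4) of $\dwa(\sigma_0)$ and the strength of the lower bound in $\ddens$ are used in a balanced way, and where the restriction $\sigma_0>\sigma_2(D)$ (rather than merely $\sigma_0\ge\sigma_2(D)$) is needed to get summable error bounds.
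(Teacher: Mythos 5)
First, a point of comparison: the paper does not actually prove Theorem \ref{thm:bayrearrangement} --- it is quoted from \cite{Bayrearrangement}, and the only contribution here is the remark that replacing the half-plane $\CC_{\sigma_0}$ by the quarter-plane $\CC_{\sigma_0}^+$ in Definition \ref{def:dwa} changes nothing in the proofs. Your sketch follows the same Bagchi-type architecture as that source (vertical limits $D_\omega$ on $\ttinf$, density of the rotated/rearranged Dirichlet polynomials, transfer to vertical translates with positive lower density via equidistribution and mean-square control), so the roadmap is the right one.

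As a proof, however, there is a genuine gap exactly at the step you yourself identify as the heart of the matter. The observation that a window $\lambda_n\in[x,x+\alpha/x^2]$ carries mass $\gtrsim e^{(\sigma_a(D)-\beta)x}$ and that the exponentials $e^{-\lambda_n s}$ are nearly constant on it shows only that each window sum can be given an arbitrary argument and a large modulus; by itself this does not give density of the finite sums in $H(K)$, and the ``classical'' lemma you invoke (``if the partial sums of $\sum\|v_n\|$ diverge and the directions equidistribute then finite signed sums are dense'') is not a correct statement of any standard result. What is actually required (and what \cite{Bayrearrangement} carries out) is a Pecherskii/Bagchi-type rearrangement theorem in a Hilbert space of analytic functions: one works in a Bergman or $H^2$ space of a domain containing $K$, checks square-summability of the terms there (this is where the hypothesis $\sigma_0>\sigma_2(D)$ produces summable tails), checks that the projections onto every direction have divergent $\ell^1$-sum (this is where $\ddens$, the shortness of the windows and the $\QQ$-linear independence enter), and only then deduces density, passing to uniform approximation on $K$ via Mergelyan and interior estimates; none of this is executed in your text. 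Likewise the final transfer step is dismissed as ``a standard approximation-of-$D$-by-$D_\omega$ lemma,'' whereas it needs a Carlson-type mean-square theorem on vertical lines built precisely from conditions (3) and (4) of $\dwa(\sigma_0)$ together with a Birkhoff-type argument; those conditions are named in your outline but never actually used. In short, the proposal is a correct high-level description of the cited proof rather than a proof.
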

	It should be pointed out that Definition \ref{def:dwa} in \cite{Bayrearrangement} mentions the whole half-plane $\CC_{\sigma_0}$ and not the quarter-plane as here. However, this does not change anything for the proofs. The key points are that the half vertical lines $\sigma+it,\ t>0,\ \sigma>\sigma_0,$ are contained in $\CC_{\sigma_0}^+$ and that for any compact set $K$ included in the strip $\{\sigma_0<\Re e (s)<\sigma_a(D)\},$ there exists $\tau>0$ such that $K+i\tau\subset \CC_{\sigma_0}^+.$
	
	The main difficulty will be to verify that the square moments of a Dirichlet series are bounded, that is condition (b) of Definition \ref{def:dwa}.
	
	\subsection{Two lemmas to estimate exponential sums}
	
	We shall need two inequalities that have been widely used in this context. 
	The first one deals with exponential sums and is due to Montgomery and Vaughan
	(see \cite{MV74}).
	
	\begin{lemma}\label{lem:MV}
		Let $(a_n)$ be a sequence of complex numbers such that $\sum_n |a_n|^2<+\infty$. Let $(\lambda_n)$ be a sequence of real numbers 
		and set $\theta_n:=\inf_{m\neq n}|\lambda_n-\lambda_m|>0$ for every $n$. Then 
		$$\int_0^T \left|\sum_n a_ne^{i\lambda_n t}\right|^2dt=T\sum_n |a_n|^2+O\left(\sum_n \frac{|a_n|^2}{\theta_n}\right)$$
		where the $O$-constant is absolute.
	\end{lemma}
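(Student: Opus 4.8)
The plan is to expand the square, peel off the diagonal as the main term, and reduce the off-diagonal part to Hilbert's inequality in the refined form due to Montgomery and Vaughan. Work first with the truncated sums $S_N(t)=\sum_{n=1}^{N}a_ne^{i\lambda_nt}$. Multiplying out, $\int_0^T|S_N(t)|^2\,dt=\sum_{m,n\le N}a_m\overline{a_n}\int_0^Te^{i(\lambda_m-\lambda_n)t}\,dt$; the diagonal terms contribute $T\sum_{n\le N}|a_n|^2$, and for $m\neq n$ one has $\int_0^Te^{i(\lambda_m-\lambda_n)t}\,dt=\frac{e^{i(\lambda_m-\lambda_n)T}-1}{i(\lambda_m-\lambda_n)}$. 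Hence $\int_0^T|S_N(t)|^2\,dt=T\sum_{n\le N}|a_n|^2+E_N(T)$, where $E_N(T):=\sum_{\substack{m,n\le N\\ m\neq n}}a_m\overline{a_n}\,\frac{e^{i(\lambda_m-\lambda_n)T}-1}{i(\lambda_m-\lambda_n)}$, and the whole point is to bound $E_N(T)$ by $O\big(\sum_n|a_n|^2/\theta_n\big)$ uniformly in $N$ and in $T$.

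To control $E_N(T)$, split $e^{i(\lambda_m-\lambda_n)T}-1$ into its two summands. The contribution of the $-1$ is $-\frac1i\sum_{m\neq n}\frac{a_m\overline{a_n}}{\lambda_m-\lambda_n}$, and the contribution of $e^{i(\lambda_m-\lambda_n)T}$ is $\frac1i\sum_{m\neq n}\frac{b_m\overline{b_n}}{\lambda_m-\lambda_n}$, where $b_n:=a_ne^{i\lambda_nT}$, so that $|b_n|=|a_n|$. Both are the same anti-Hermitian bilinear form evaluated on $\ell^2$ vectors, and here one invokes the generalized Hilbert inequality of Montgomery and Vaughan: there is an absolute constant $C$ such that, for any reals $(\mu_n)$ and any $(c_n)\in\ell^2$, $\big|\sum_{m\neq n}\frac{c_m\overline{c_n}}{\mu_m-\mu_n}\big|\le C\sum_n\frac{|c_n|^2}{\delta_n}$ with $\delta_n=\inf_{m\neq n}|\mu_m-\mu_n|$. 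Applied to the finite families $(\lambda_n)_{n\le N}$, whose separations $\min_{m\le N,\,m\neq n}|\lambda_m-\lambda_n|$ are $\ge\theta_n$, this yields $|E_N(T)|\le 2C\sum_{n\le N}|a_n|^2/\theta_n\le 2C\sum_n|a_n|^2/\theta_n$, uniformly in $N$ and $T$.

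It remains to pass to the limit. If $\sum_n|a_n|^2/\theta_n=+\infty$ the statement is vacuous; otherwise, the same expansion and the bound just obtained, applied to a tail $S_N-S_M=\sum_{M<n\le N}a_ne^{i\lambda_nt}$ together with $\sum_n|a_n|^2<\infty$, show that $(S_N)$ is a Cauchy sequence in $L^2([0,T])$, hence converges there to $S(t)=\sum_na_ne^{i\lambda_nt}$. Letting $N\to\infty$ in $\int_0^T|S_N(t)|^2\,dt=T\sum_{n\le N}|a_n|^2+E_N(T)$ and using $|E_N(T)|\le 2C\sum_n|a_n|^2/\theta_n$ for every $N$ gives $\big|\int_0^T|S(t)|^2\,dt-T\sum_n|a_n|^2\big|\le 2C\sum_n|a_n|^2/\theta_n$, which is the assertion with an absolute $O$-constant.

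The algebraic expansion and the truncation argument are routine; the substantive ingredient, and the real obstacle, is the non-uniform Hilbert inequality invoked above. In the equally spaced case $\mu_n=n$ it reduces to the classical inequality $\big|\sum_{m\neq n}\frac{c_m\overline{c_n}}{m-n}\big|\le\pi\sum_n|c_n|^2$, which follows in a line by writing $\frac1k=\frac{i}{2\pi}\int_0^{2\pi}(\pi-\theta)e^{-ik\theta}\,d\theta$ for $k\in\ZZ\setminus\{0\}$, inserting this into the sum, recognizing the integrand as $(\pi-\theta)|\sum_nc_ne^{-in\theta}|^2$ (the diagonal drops because $\int_0^{2\pi}(\pi-\theta)\,d\theta=0$), and bounding $|\pi-\theta|\le\pi$ together with Parseval. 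For arbitrary spacings the flexibility of $\delta_n=\inf_{m\neq n}|\mu_m-\mu_n|$ forces the more delicate argument of Montgomery and Vaughan \cite{MV74}, which I would either reproduce or simply quote as the single external input of the proof.
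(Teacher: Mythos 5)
Your argument is correct and is essentially the proof of Montgomery and Vaughan themselves: the paper offers no proof of this lemma but quotes it from \cite{MV74}, where the mean value theorem is deduced exactly as you do, by expanding the square, keeping the diagonal as the main term, and bounding the off-diagonal bilinear form with the weighted Hilbert inequality $\bigl|\sum_{m\neq n}c_m\overline{c_n}/(\mu_m-\mu_n)\bigr|\leq C\sum_n |c_n|^2/\delta_n$, applied once with $c_n=a_n$ and once with $c_n=a_ne^{i\lambda_nT}$. Since that inequality is the genuine content of the cited reference, your proof is complete modulo the same external input the paper relies on, and your truncation and $L^2$-limit step is sound (in the paper's application the coefficients decay so fast that the series converges absolutely, so the pointwise and $L^2$ readings of $\sum_n a_ne^{i\lambda_nt}$ agree).
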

	
	We also need the following classical inequality for exponential sums, which goes back to J. G. Van der Corput 
	(see \cite[Lemma 11.5]{BM09}).
	
	\begin{lemma}\label{lem:vandercorput}
		Let $a<b$ and let $f,\,g:[a,b]\to\mathbb R$ be two functions of class $\mathcal C^2.$ Assume that
		\begin{itemize}
			\item $f'$ is monotonic with $|f'|<1/2$;
			\item $g$ is positive, nonincreasing and convex.
		\end{itemize}
		Then 
		$$\sum_{n=a}^b g(n)e^{2\pi i f(n)}=\int_a^b g(u)e^{2\pi i f(u)}du+O(g(a)+|g'(a)|)$$
		where the $O$-constant is absolute.
	\end{lemma}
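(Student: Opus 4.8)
The statement to be proved is Lemma \ref{lem:vandercorput}, the Van der Corput-type comparison between an exponential sum and the corresponding integral. The plan is to reduce everything to the single-term estimate for a ``bump'' of unit frequency variation, by a combination of Abel summation (to move the weight $g$ out of the way) and a dyadic-type splitting of $[a,b]$ into pieces on which $f'$ traverses an interval of length roughly $1/2$. First I would recall the classical special case $g\equiv 1$: if $h:[c,d]\to\RR$ is $\mathcal C^1$ with $h'$ monotonic and $|h'|<1/2$ on $[c,d]$, then $\sum_{c\le n\le d}e^{2\pi i h(n)}=\int_c^d e^{2\pi i h(u)}\,du+O(1)$ with an absolute $O$-constant. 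This is the content of \cite[Lemma 11.5]{BM09} (or equivalently follows from Euler–Maclaurin together with the truncated Poisson summation formula: under $|h'|<1/2$ only the zero frequency contributes a main term and the nonzero frequencies are controlled by the first-derivative test, using monotonicity of $h'$ to sum the geometric-type tails). I would state this as the base case and then bootstrap.

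Next I would handle the weight $g$ by partial summation. Writing $S(x):=\sum_{a\le n\le x}e^{2\pi i f(n)}$ and, by the base case, $S(x)=I(x)+E(x)$ with $I(x):=\int_a^x e^{2\pi i f(u)}\,du$ and $|E(x)|\le A$ for an absolute constant $A$ and all $x\in[a,b]$, Abel summation gives
\[
\sum_{a\le n\le b} g(n)e^{2\pi i f(n)} = g(b)S(b)-\int_a^b S(x)\,g'(x)\,dx.
\]
Substituting $S=I+E$, the $I$-terms reassemble (by a second partial summation / Fubini, exactly the continuous analogue of the identity above) into $\int_a^b g(u)e^{2\pi i f(u)}\,du + \big(\text{boundary term at }a\big)$, where the boundary term is $g(a)\cdot I(a)=0$ up to the discrete-versus-continuous endpoint discrepancy, which is itself $O(g(a))$. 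It remains to bound the $E$-contributions: $|g(b)E(b)|\le A\,g(b)\le A\,g(a)$ since $g$ is non-increasing and positive, and
\[
\left|\int_a^b E(x)g'(x)\,dx\right|\le A\int_a^b |g'(x)|\,dx = A\,(g(a)-g(b))\le A\,g(a),
\]
using again monotonicity of $g$. Collecting terms yields the claimed error $O(g(a)+|g'(a)|)$; in fact this argument produces the slightly stronger $O(g(a))$, and the extra $|g'(a)|$ in the statement is harmless slack (it is there to absorb the endpoint corrections $\{a\}$, $\{b\}$ when one is more careful about replacing $\sum_{a\le n\le b}$ by $\sum_{n=\lceil a\rceil}^{\lfloor b\rfloor}$, where a term like $g(a)-g(\lceil a\rceil)=O(|g'(a)|)$ appears).

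The main obstacle — and the only place where the hypotheses beyond ``$g$ positive non-increasing'' are used — is justifying the two partial-summation manipulations and the reassembly of the integral term without picking up uncontrolled boundary contributions; convexity of $g$ is what guarantees $g'$ is itself monotonic, hence of bounded variation, so that the Riemann–Stieltjes integrals $\int_a^b S(x)\,dg(x)$ and $\int_a^b I(x)\,dg(x)$ are well defined and Fubini applies cleanly. I would also need to be slightly careful that $f'$ monotonic and $|f'|<1/2$ are exactly the hypotheses under which the base case applies on all of $[a,b]$ at once — no splitting is actually required here because the condition $|f'|<1/2$ is global — so the dyadic decomposition I alluded to above is in fact unnecessary for this precise statement and I would drop it, keeping the proof to: (i) cite/recall the unweighted case, (ii) Abel summation to insert $g$, (iii) estimate the error terms using monotonicity and convexity of $g$.
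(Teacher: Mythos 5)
The paper does not prove this lemma at all: it is quoted as a classical result and attributed to \cite[Lemma~11.5]{BM09}, so there is no internal argument to compare against. Your derivation is correct and is a genuinely different (and in one respect stronger) route than the textbook one. Reducing to the unweighted case $g\equiv 1$ (the classical van der Corput/Titchmarsh lemma: $f'$ monotonic, $|f'|<1/2$ gives $\sum e^{2\pi i f(n)}=\int e^{2\pi i f}+O(1)$ with absolute constant) and then inserting the weight by Abel summation works exactly as you say: with $S=I+E$, the $I$-part reassembles exactly into $\int_a^b g(u)e^{2\pi i f(u)}\,du$ after one integration by parts (since $I(a)=0$ there is in fact no boundary discrepancy to worry about), and the $E$-part is bounded by $A\,g(b)+A\int_a^b|g'|\leq 2A\,g(a)$ using only that $g$ is positive and non-increasing. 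This yields the stronger error $O(g(a))$, which of course implies the stated $O(g(a)+|g'(a)|)$; the $|g'(a)|$ term is an artifact of the usual textbook proof, which applies Euler--Maclaurin directly to $\phi(u)=g(u)e^{2\pi i f(u)}$ and must bound $\int\rho(u)\phi'(u)\,du$, and that is also where monotonicity/convexity of $g$ is really used --- in your argument convexity plays no role (your remark that it is needed to make the Stieltjes integrals and Fubini work is unnecessary: $g$ is $\mathcal C^2$ and $S$ is a bounded step function, so ordinary Riemann integration suffices), and your correct observation that $|f'|<1/2$ holds globally makes the initially proposed dyadic splitting superfluous, as you note. Two small caveats: your proof is complete only modulo the unweighted base case, which you cite and sketch (Poisson/sawtooth expansion plus the first-derivative test with the $1/\nu$ Fourier factor providing summability) rather than prove --- acceptable, since that is a classical lemma, and it is the same kind of appeal to the literature the paper itself makes; and the unweighted statement is not literally ``the content of \cite[Lemma~11.5]{BM09}'' (that reference is the weighted form you are proving), so the citation for the base case should rather be to the classical lemma of van der Corput (e.g. Titchmarsh, Lemma~4.8).
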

	
	\subsection{The incomplete Gamma function/ Prym's function}
	We will make a short presentation and we refer the interested reader to \cite{Abra,Olver,Temme}.
	For $\Re e(a)>0$ and $\Re e(z)>0$, we define the incomplete Gamma function $\Gamma(a,z)$ by
	$$\Gamma(a,z)=\int_z^{+\infty}t^{a-1}e^{-t}\,dt.$$
	For fixed $z$, as in the classical case, it has a meromorphic extension in $\mathbb{C}$ with simple poles at the nonpositive integers. This can be easily obtained from the recurrence relation:
	$$\Gamma(a+1,z)=a\Gamma(a,z)+z^a e^{-z}.$$
	For a fixed value of $a,$ $\Gamma$ admits a holomorphic extension (its principal branch) to $\mathbb C\backslash \mathbb R_-$
	and even to $\mathbb C$ when $a$ is a positive integer. When $a$ is not a nonpositive integer, this follows for instance from the relation
	$$\Gamma(a,z)=\Gamma(a)(1-z^{a-1}\gamma^*(a,z)),$$
	where the function $\gamma^*$ is entire in both $a$ and $z$.
	When $a$ is a nonpositive integer, this follows from the corresponding statement for $a=0$ (in that case, the incomplete Gamma function
	is also called the exponential integral).
	
	For this principal branch (and for a fixed $a$), we have the estimation
	\begin{equation}\label{eq:ineggamma}
	\Gamma(a,z)= e^{-z}\int_{0}^{+\infty}e^{-u}\left(z+u\right)^{a-1}\,du=O(z^{a-1}e^{-z}),
	\end{equation}
	as $|z|\to+\infty.$
	
	\subsection{A remark on \cite{Bayrearrangement}}
	In \cite[Theorem~1.6]{Bayrearrangement}, a sufficient condition is stated for a Dirichlet series $D$ to be rearrangement universality, that is for any $f\in H(\Omega)$,
	where $\Omega$ is the strip $\sigma_c(D)<\Re e(s)<\sigma_a(D),$
	there exists a permutation $\sigma$ of $\mathbb N$ such that $\sum_n a_{\sigma(n)} e^{-\lambda_{\sigma(n)}s}$
	converges to $f$ in $H(\Omega)$. This theorem is false. Indeed it would imply that $\sum_n (-1)^n n^{-s}$
	is rearrangement universal. This cannot hold: any rearrangement of $\sum_n (-1)^nn^{-s}$ will take values in $\mathbb R$
	for real values of the parameter $s$.
	The mistake that is made in \cite{Bayrearrangement} lies on the fact that a lemma due to Banaszczyk is only
	true for some real Fréchet spaces and was applied to the complex Fréchet space $H(\Omega)$.
	
	%%%%%%%%%%%%%%%%%%%%%%%%%%%%%%%%%%%
	%%%%%%%%%%%%%%%%%%%%%%%%%%%%%%%%%%%
	\section{Landau's Theorem revisited} \label{sec:landau}
	\subsection{Conditions (LC) and (WLC)}
	
	In this section, we investigate the existence of square moments for a general Dirichlet series $D(s)=\sum_{n\geq 1}a_n e^{-\lambda_n s}$. We shall need an assumption on the frequency $(\lambda_n)$ saying that two consecutive elements are not too close.
	
	\begin{definition}
		We say that a frequency $(\lambda_n)$ satisfies (LC) provided, for all $\delta>0$, there exists $C>0$ such that, for all $n\in\mathbb N,$
		$$\lambda_{n+1}-\lambda_n \geq Ce^{-e^{\delta\lambda_n}}.$$
	\end{definition}
	
	Many sequences satisfy (LC). For instance, if $P$ is a polynomial with $\lim_{+\infty}P=+\infty$, then the sequences $(\log(n))$ and $\log(P(p_n))$ satisfy (LC). Let us verify this for the latter sequence. For all $n\in\mathbb N,$
	\begin{align*}
	\log(P(p_{n+1}))-\log(P(p_n))&=\log\left(1+\frac{P(p_{n+1})-P(p_n)}{P(p_n)}\right)\\
	&\gtrsim \frac{P(p_{n+1})-P(p_n)}{P(p_n)}\\
	&\gtrsim \frac1{P(p_n)}\\
	&\gtrsim e^{-\log(P(p_n))}.
	\end{align*}
	
	In his book \cite{Landau09}, Landau has proved the following result (see Satz 37).
	
	\begin{theorem}\label{thm:landau}
		Let $(\lambda_n)$ be a frequency satisfying (LC). Let $D(s)=\sum_{n\geq 1}a_n e^{-\lambda_n s}$ be a Dirichlet series with finite abscissa of absolute convergence. Then for all $\sigma>\frac{\sigma_a(D)+\sigma_c(D)}2,$
		\begin{equation}\label{eq:Landau}
		\lim_{T\to+\infty}\frac1 T\int_0^T |D(\sigma+it)|^2dt =\sum_{n}|a_n|^2 e^{-2\lambda_n \sigma}.
		\end{equation}
	\end{theorem}
	Note that if $\limsup_n\frac{\log(n)}{\lambda_n}=\lambda$, then \eqref{eq:Landau} holds for every $\beta>\sigma_c(D)+\frac{\lambda}{2}$ since in that case $\sigma_a(D)\leq \sigma_c(D)+\lambda$.
	
	To verify that a Dirichlet series belongs to $\dwa(\sigma_0)$, one needs slightly more than Landau's theorem since we require that \eqref{eq:Landau} holds uniformly for all $\sigma\geq \beta$ with $\beta>\frac{\sigma_a(D)+\sigma_c(D)}2$. This stronger statement can be deduced from a careful inspection of the proof of \cite{Landau09}.
	
	The condition (LC) also appears in another very classical problem concerning Dirichlet series. Under which conditions do the partial sums converge uniformly to the associated Dirichlet series?
	In the classical setting, where $\lambda_n=\log(n)$, Bohr's theorem states:
	if a Dirichlet series converges somewhere and has a bounded analytic extension to $\{\Re e(s)>0\}$, then it converges uniformly in each half-plane $\{\Re(s)\geq \veps\}$, for all $\veps>0$. Recently, Bohr's theorem has been extended in an optimal way (see \cite{BK24}). We can derive the same conclusion with the weaker assumption that the range of the Dirichlet series omits two distinct points in the complex plane.
	
	For general sequences, the existence of a bounded analytic extension of a Dirichlet series does not always imply uniform convergence (see \cite{N22}). However, for Dirichlet series induced by a $\mathbb{Q}$-linear independent frequency, Bohr's theorem holds (see \cite{B13}). In the general setting, we need to require some separation between the terms of the frequency:
	if $(\lambda_n)$ satisfies (LC), then the analogue of Bohr's theorem holds (see \cite{L21}). In \cite{Baygendir}, it is shown that this last theorem can also be obtained with a relaxed condition on the sequence $(\lambda_n)$.
	
	\medskip
	
	In this section, we shall also extend Landau's theorem with a relaxed condition on the frequency.
	
	\begin{definition}
		We say that a frequency $\lambda$ satisfies (WLC) if for all $\delta>0$, there exists $C>0$ such that, for all $m\geq 2$, there exists $m_0,\,m_1\in\mathbb N$ with $m_0<m<m_1$ such that 
		\begin{align*}
		\lambda_{m_1}-\lambda_m &\geq Ce^{-e^{\delta\lambda_m}},\\
		\lambda_m-\lambda_{m_0}& \geq Ce^{-e^{\delta\lambda_{m_0}}},\\
		m_1-m_0&\leq Ce^{\delta\lambda_{m_0}}.
		\end{align*}
	\end{definition}
	
	Observe that any sequence satisfying (LC) also verifies (WLC): for all $m\geq 2$, one just takes $m_0=m-1$ and $m_1=m+1$. On the contrary, there are sequences satisfying (WLC) and not (LC). Indeed, define $(\lambda_n)$ by 
	$$\lambda_{2^n+k}=n^2+ke^{-e^{n^2}},\ k=0,\dots,2^n-1.$$
	Let $m=2^n+k,$ $n\geq 1$, let $\delta>0$ and define $m_0=2^{n-1}$, $m_1=2^{n+1}$. Then 
	\begin{align*}
	m_1-m_0&\leq 2^{n+1}\leq Ce^{\delta (n-1)^2}, \\
	\lambda_{m_1}-\lambda_{m}&\geq n\gtrsim e^{-e^{\delta n^2}}, \\
	\lambda_{m}-\lambda_{m_0}&\geq n\gtrsim e^{-e^{\delta (n-1)^2}}.
	\end{align*}
	Moreover, it is shown in \cite{Baygendir} that $(\lambda_n)$
	is not the finite union of frequencies satisfying (LC).
	
	\medskip
	
	We intend to prove the following precise version of Landau's theorem.
	
	\begin{theorem}\label{thm:ourlandau}
		Let $(\lambda_n)$ be a frequency satisfying (WLC). Let $D(s)=\sum_{n\geq 1}a_n e^{-\lambda_n s}$ be a Dirichlet series with finite abscissa of absolute convergence.
		Then for all $\beta> \frac{\sigma_a(D)+\sigma_c(D)}2$ and for all $\sigma\geq \beta,$
		\begin{equation}\label{eq:ourLandau}
		\lim_{T\to+\infty}\frac{1}{2T}\int_{-T}^T |D(\sigma+it)|^2dt =\sum_{n}|a_n|^2 e^{-2\lambda_n \sigma}
		\end{equation}
		uniformly in $\sigma\geq\beta.$
	\end{theorem}
	
	Since Landau's theorem seems to have been forgotten and since we need uniformity, we shall give a complete proof of Theorem \ref{thm:ourlandau} 
	in the next subsection. Of course, except for replacing (LC) by (WLC), we do not claim originality.
	
	\subsection{Proof of Landau's theorem}
	
	We start with a couple of lemmas.
	
	\begin{lemma}\label{lem:WLC}
		Let $\delta>0$ and $C>0$. Then there exists $C'>0$ such that, for any $a,b\in[1,+\infty)$:
		\begin{itemize}
			\item If $a-b\geq Ce^{-e^{b\delta}}$, then $\displaystyle \int_1^b \frac{e^{-u\delta}}{a-u}du\leq C'$.
			\item If $a-b\geq Ce^{-e^{a\delta}}$, then $\displaystyle \int_a^{+\infty}\frac{e^{-u\delta}}{u-b}du\leq C'$.
		\end{itemize}
	\end{lemma}
	\begin{proof}
		We just need to write
		\begin{align*}
		\int_1^b \frac{e^{-u\delta}}{a-u}du&\leq \int_1^{b-1}e^{-u\delta}du+\int_{b-1}^b \frac{e^{-u\delta}}{a-u}du\\
		&\lesssim 1+e^{-b\delta}\big(-\log(a-b)+\log(a-b+1)\big)\\
		&\lesssim 1+e^{-b\delta}\log\left(1+\frac 1{a-b}\right)\\
		&\lesssim 1.
		\end{align*}
		The proof of the other case is similar.
	\end{proof}
	
	In the following, we fix a frequency $(\lambda_n)$.
	
	\begin{lemma}\label{lem:uniformreste}
		Let $D(s)=\sum_{n=1}^{+\infty}a_n e^{-\lambda_n s}$ and $\sigma_0>\sigma_c(D).$ Then there exist $C>0$, $\delta>0$
		such that, for all $n\geq 1$ and for all $\sigma\geq\sigma_0,$
		$$\left|\sum_{k=n}^{+\infty}a_k e^{-\lambda_k\sigma}\right|\leq Ce^{-\lambda_n\delta}.$$
	\end{lemma}
	
	This is \cite[Satz 34]{Landau09}. For expository reasons, we will present an elementary proof of it.
	\begin{proof}
		The Dirichlet series converges at $\frac{\sigma_c(D)+\sigma_0}{2}$ and as a consequence the partial sums 
		$$A(n)=\sum_{k=1}^{n}a_k e^{-\lambda_k\frac{\sigma_c(D)+\sigma_0}{2}}$$
		are bounded by a constant $C'>0$. A summation by parts yields to
		\[\left|\sum_{k=n}^{+\infty}a_k e^{-\lambda_k\sigma}\right|\leq 2C'e^{-\lambda_n\frac{2\sigma-\sigma_0-\sigma_c(D)}{2}}.\]
		Thus, we can choose
		\begin{equation*}
		\delta=\frac{\sigma_0-\sigma_c(D)}2,\qquad
		C=2\sup_{n\geq 1}\left|A(n)\right|.
		\end{equation*}  
	\end{proof}
	
	The next lemma is the key point and the only place where we use (WLC).
	\begin{lemma}\label{lem:kwlc}
		Assume that $(\lambda_n)$ satisfies (WLC) and let $D(s)=\sum_{n=1}^{+\infty}a_n e^{-\lambda_n s}$. For every $\sigma_0>\sigma_c(D),$ there exist $C>0$ and $\delta>0$ such that, for all $r\in\NN,$
		for all $m\in\NN,$ for all $T\geq 1,$ for all $\sigma\geq \sigma_0,$
		$$\left|\frac 1{2T}\int_{-T}^T e^{\lambda_m it}\sum_{\substack{n=r\\n\neq m}}^{+\infty}
		a_n e^{-\lambda_n(\sigma+it)}dt\right|\leq Ce^{-\lambda_r \delta }.$$
	\end{lemma}
	\begin{proof}
		In what follows, the implied constant in the notation $f(m,r,T,\sigma)\lesssim g(m,r,T,\sigma)$ will never depend on $m\in\mathbb N$, $r\in\mathbb N$, $T\geq 1$ and $\sigma\geq\sigma_0$. For $n\in\mathbb N$ and $\sigma\geq\sigma_0$, we denote by $R_n(\sigma)$ the quantity
		$$R_n(\sigma)=\sum_{k=n}^{+\infty}a_k e^{-\lambda_k \sigma}.$$
		By Lemma \ref{lem:uniformreste}, there exists $\delta>0$ such that, for all $n\in\mathbb N$ and all $\sigma\geq\sigma_0,$
		\begin{equation}\label{eq:reste}
		|R_n(\sigma)|\lesssim e^{-\lambda_n\delta}.
		\end{equation}
		We use that $(\lambda_n)$ satisfies (WLC) for $\delta/2$ to get, for each $m\geq 2,$ the existence of $m_0<m<m_1$ satisfying
		\begin{align*}
		\lambda_{m_1}-\lambda_m &\gtrsim e^{-e^{\delta\lambda_m/2}},\\
		\lambda_m-\lambda_{m_0}& \gtrsim e^{-e^{\delta\lambda_{m_0}/2}},\\
		m_1-m_0&\lesssim e^{\delta\lambda_{m_0}/2}.
		\end{align*}
		We fix $m\geq 2$ and $r\in\mathbb N$. We first assume that $r\leq m_0$. Then we split the sum $\sum_{n=r}^{+\infty}$ into three parts: $\sum_{n=r}^{m_0-1}$, $\sum_{m_0}^{m_1-1}$ and $\sum_{m_1}^{+\infty}$. We first observe that, for all $\sigma\geq \sigma_0$ and all $T\geq 1,$
		\begin{align*}
		\left|\frac 1{2T}\int_{-T}^T e^{\lambda_m it}\sum_{\substack{n=m_0\\n\neq m}}^{m_1-1}
		a_n e^{-\lambda_n(\sigma+it)}dt\right|&\leq (m_1-m_0)\sup_{n\in[m_0,m_1-1]}|a_n|e^{-\lambda_n  \sigma}\\
		&\lesssim e^{\delta\lambda_{m_0}/2}e^{-\delta\lambda_{m_0}}\\
		&\lesssim e^{-\delta\lambda_{m_0}/2}.
		\end{align*}
		By a summation by parts, for all $\sigma\geq\sigma_0$ and all $t\in\mathbb R$,
		\begin{align*}
		\sum_{n=m_1}^{+\infty}a_n e^{-\lambda_n(\sigma+it)} &=\sum_{n=m_1+1}^{+\infty} R_n(\sigma)
		\big(e^{-\lambda_n it}-e^{-\lambda_{n-1}it}\big)+R_{m_1}(\sigma)e^{-\lambda_{m_1}it}\\
		&=-\sum_{n=m_1+1}^{+\infty}R_n(\sigma)it\int_{\lambda_{n-1}}^{\lambda_n}e^{-iut}du+R_{m_1}(\sigma)e^{-\lambda_{m_1}it}.
		\end{align*}
		We multiply by $e^{\lambda_m it}$ and integrate over $[-T,T],\,T\geq 1$:
		\begin{align*}
		\int_{-T}^T e^{\lambda_m it}\sum_{n=m_1}^{+\infty}a_n e^{-\lambda_n(\sigma+it)}dt&=-\sum_{n=m_1+1}^{+\infty}R_n(\sigma)\int_{\lambda_{n-1}}^{\lambda_n}du\int_{-T}^T ite^{it(\lambda_m-u)}dt \\
		&\quad+R_{m_1}(\sigma)\int_{-T}^T e^{(\lambda_m-\lambda_{m_1})it}dt.
		\end{align*}
		By an integration by parts and the estimate \eqref{eq:reste}, 
		\begin{align*}
		\left| \sum_{n=m_1+1}^{+\infty}R_n(\sigma)\int_{\lambda_{n-1}}^{\lambda_n}du\int_{-T}^T ite^{it(\lambda_m-u)}dt\right|&\lesssim \sum_{n=m_1+1}^{+\infty} e^{-\lambda_n\delta}\int_{\lambda_{n-1}}^{\lambda_n} \frac T{u-\lambda_m}du\\
		&\lesssim T\sum_{n=m_1+1}^{+\infty}e^{-\lambda_n\delta/2}\int_{\lambda_{n-1}}^{\lambda_n}\frac{e^{-u\delta/2}}{u-\lambda_m}du\\
		&\lesssim T e^{-\lambda_r\delta/2} \int_{\lambda_{m_1}}^{+\infty}\frac{e^{-u\delta/2}}{u-\lambda_m}du\\
		&\lesssim Te^{-\lambda_r\delta/2},   
		\end{align*}
		in the last step we applied Lemma \ref{lem:WLC}. Moreover
		$$\left |R_{m_1}(\sigma)\int_{-T}^T e^{(\lambda_m-\lambda_{m_1})it}dt\right|\lesssim Te^{-\lambda_{m_1}\delta}\leq Te^{-\lambda_r \delta}.$$
		To estimate the sum $\sum_{n=r}^{m_0}$ we work in a similar manner as above 
		using the first half of Lemma \ref{lem:WLC}. If $r$ is not less than $m_0,$ we follow the same strategy but there are less terms to consider.
	\end{proof}
	
	Now, we can approximate uniformly the coefficients of a Dirichlet series.
	
	\begin{theorem}\label{thm:landaulike}
		Assume that $(\lambda_n)$ satisfies $(WLC)$ 
		and let $D(s)=\sum_{n\geq 1}a_n e^{-\lambda_n s}$ be a Dirichlet series. Then for every
		$\sigma_0>\sigma_c(D)$ and $\veps>0$ there exists $T_0\geq 1$ such that, for all $T\geq T_0$, for all $m\geq 1$, for all $\sigma\geq\sigma_0$,
		$$\left|\frac 1{2T}\int_{-T}^T e^{\lambda_m it}D(\sigma+it)dt-a_me^{-\lambda_m \sigma}\right|\leq\veps.$$
	\end{theorem}
	\begin{proof}
		By Lemma \ref{lem:kwlc}, there exists $r\geq 1$ such that, for all $T\geq 1$, for all $m\geq 1$ and all $\sigma\geq \sigma_0,$
		$$\left|\frac 1{2T}\int_{-T}^T e^{\lambda_m it}\sum_{\substack{n=r\\n\neq m}}^{+\infty}
		a_n e^{-\lambda_n(\sigma+it)}dt\right|\leq\veps.$$
		Writing 
		$$D(\sigma+it)=\sum_{\substack{n=1\\n\neq m}}^{r-1}a_n e^{-\lambda_n (\sigma+it)}+\sum_{\substack{n=r\\n\neq m}}^{+\infty}a_ne^{-\lambda_n(\sigma+it)}+a_me^{-\lambda_m(\sigma+it)},$$
		we just need to prove that there exists $T_0\geq 1$ such that, for all $T\geq T_0$,
		for all $m\geq 1,$ for all $\sigma\geq\sigma_0$
		$$\left|\frac 1{2T}\int_{-T}^T e^{\lambda_m it}\sum_{\substack{n=1\\n\neq m}}^{r-1}
		a_n e^{-\lambda_n(\sigma+it)}dt\right|\leq\veps.$$
		Now,
		\begin{align*}
		\left|\frac 1{2T}\int_{-T}^T e^{\lambda_m it}\sum_{\substack{n=1\\n\neq m}}^{r-1}
		a_n e^{-\lambda_n(\sigma+it)}dt\right|&\leq \frac 1{2T}\sum_{\substack{n=1\\n\neq m}}^{r-1}|a_n|e^{-\lambda_n \sigma}\left|\int_{-T}^T e^{i(\lambda_m-\lambda_n)t}dt\right|\\
		&\leq \frac 1T \sum_{n=1}^{r-1}|a_n|e^{-\lambda_n\sigma_0}\sup_{j=1,\dots,r-1}\frac1{\lambda_{j+1}-\lambda_j}
		\end{align*}
		which yields the result. 
	\end{proof}
	Schnee proved a non-uniform version of the above theorem with the extra assumption of (LC). In the book of Landau \cite{Landau09}, the proof of Schnee's theorem gives the same result without the extra assumption of (LC), see also \cite{Helson63}.
	
	Absolute convergence and Theorem~\ref{thm:landaulike} yield to the following corollary:
	\begin{corollary}\label{absmoment}
		Assume that $(\lambda_n)$ satisfies $(WLC)$ and that $D(s)=\sum_{n\geq 1}a_n e^{-\lambda_n s}$, $f(s)=\sum_{n\geq 1}b_n e^{-\lambda_n s}$ are Dirichlet series with finite abscissas of convergence and of absolute convergence, respectively. 
		Let $\sigma_0>\sigma_c(D)$, $\sigma_1>\sigma_a(f)$  and $\veps>0$. Then there exists $T_0\geq 1$ such that, for all $T\geq T_0$, for all $\sigma\geq\sigma_0$ and $x\geq\sigma_1$,
		$$\left|\frac 1{2T}\int_{-T}^{T} D(\sigma+it)\overline{f(x+it)}dt-\sum_{n\geq 1}a_n\overline{b_n} e^{-\lambda_n(\sigma+x)}\right|\leq\veps.$$
	\end{corollary}
	
	The last lemma that we need for the proof of Landau's theorem is a well-known estimate for the order of a convergent Dirichlet series, see \cite{HarRi}.
	\begin{lemma}
		Let $D(s)=\sum_{n\geq 1}a_n e^{-\lambda_n s}$ be a Dirichlet series with finite abscissa of absolute convergence and assume that $\tau=\sigma_a(D)-\sigma_c(D)>0$. Then for every $\veps>0$, there exists $C(\veps)>0$ such that, for all $\sigma\in(\sigma_c(D)+\varepsilon,\sigma_a(D)]$, 
		\begin{equation}\label{eq:order}
		|D(\sigma+it)|\leq C |t|^{1-\frac{\sigma-\sigma_c(D)-\frac{\varepsilon}{2}}{\tau}},\qquad |t|\geq 1.
		\end{equation}
	\end{lemma}
	For expository reasons, we present the classical argument.
	\begin{proof}
		The Dirichlet series converges at $\sigma_c(D)+\frac{\varepsilon}{2}$. Thus, the partial sums 
		$$A(n):=\sum_{k=1}^{n}a_k e^{-\lambda_k(\sigma_c(D)+\frac{\varepsilon}{2})}$$
		are bounded by a constant $M>0$. A summation by parts yields 
		\begin{align*}
		|D(\sigma+it)|&\lesssim \sum_{n=1}^{N}|a_n| e^{-\lambda_n\sigma}+|t|\sum_{n>N}\int_{\lambda_n}^{\lambda_{n+1}}e^{-u(\sigma-\sigma_c(D)-\frac{\varepsilon}{2})}\,du+ e^{-\lambda_{N+1}(\sigma-\sigma_c(D)-\frac{\varepsilon}{2})}\\
		&\lesssim e^{\lambda_N(\sigma_a(D)+\frac{\varepsilon}{2}-\sigma)}+|t|e^{-\lambda_{N+1}(\sigma-\sigma_c(D)-\frac{\varepsilon}{2})}.
		\end{align*}
		The desired estimate follows if we choose $\lambda_N$ to be the largest term of the frequency that is not greater than $\frac{\log|t|}{\tau}$.
	\end{proof}
	\begin{proof}[Proof of Landau's theorem]
		We observe that
		\[\int_{-T}^T|D(\sigma+it)|^2\,dt=-i\int_{-iT+\sigma}^{iT+\sigma}D(s)\widetilde{D}(2\sigma-s)\,ds,\]
		where $\widetilde{D}(s)=\overline{D(\overline{s})}$ is the symmetric holomorphic function of $D$ with respect to the real line. We apply Cauchy's theorem, yielding to:
		\begin{multline*}
		\int_{-T}^T|D(\sigma+it)|^2\,dt-\int_{-T}^T D(\sigma_c(D)+\varepsilon+it)\overline{D(2\sigma-\sigma_c(D)-\varepsilon+it)}\,dt\\
		=\frac{1}{i}\int_{\sigma_c(D)+\varepsilon-iT}^{\sigma-iT}D(s)\widetilde{D}(2\sigma-s)\,ds-\frac{1}{i}\int_{\sigma_c(D)+\varepsilon+iT}^{\sigma+iT}D(s)\widetilde{D}(2\sigma-s)\,ds,
		\end{multline*}
		for $\varepsilon>0$ sufficiently small. By \eqref{eq:order} the integrals in the right hand side are $O(T^A)$, where $A=A(\varepsilon)<1$ and all the associated constants depend only on $\varepsilon>0$ and on $D$. Choosing $\veps>0$ such that $2\beta-\sigma_c(D)-\veps>\sigma_a(D)$, this and Corollary~\ref{absmoment} imply that 
		\[\lim_{T\rightarrow+\infty}\frac{1}{2T}\int_{-T}^T|D(\sigma+it)|^2\,dt=\sum_{n\geq 1}|a_n|^2e^{-2\sigma\lambda_n},\]
		and the convergence is uniform for $\sigma\geq\beta>\frac{\sigma_a(D)+\sigma_c(D)}{2}$.
	\end{proof}
	\subsection{Application to the square moments of the zeta function}
	
	Landau's theorem gives a direct and elementary proof of the fact that the square moments of the zeta function are finite. At least in the recent literature, all the proofs go through the method of non-stationary phase and Euler's summation formula, see Section~\ref{5}. 
	\begin{theorem}
		For $\sigma\in\left(\frac{1}{2},1\right)$
		\begin{equation}
		\lim_{T\to+\infty}\frac1 T\int_0^T |\zeta(\sigma+it)|^2dt =\zeta(2\sigma),
		\end{equation}
		uniformly in $\sigma\in (\sigma_1,\sigma_2)$, where $\frac{1}{2}<\sigma_1<\sigma_2<1$.
	\end{theorem}
	\begin{proof}
		We consider the Dirichlet eta function $\eta(s)=\sum\limits_{n\geq1}(-1)^{n-1}n^{-s},\, \Re e(s)>0$. It is easy to verify that $\sigma_c(\eta)=0$, $\sigma_a(\eta)=1$ and that
		\begin{equation*}
		\eta(s)=(1-2^{1-s})\zeta(s), \qquad \Re e(s)>0,\, s\neq1.
		\end{equation*}
		Applying Landau's theorem to the eta function, we obtain
		\[\int_0^T |\zeta(\sigma+it)|^2dt =O(T),\]
		uniformly in $\sigma\in(\frac{1}{2}+\varepsilon,1-\varepsilon)$.
		To prove that the limit is $\zeta(2\sigma)$, one needs to repeat the same argument for the functions
		\[\eta_N(s)=\sum_{n\geq1}a_{n,N}n^{-s}=(1-2^{1-s})(\zeta(s)-\zeta_N(s))=\sum_{n>2N}(-1)^{n-1}n^{-s}+\sum_{n=N+1}^{2N}n^{-s},\]
		where $\zeta_N(s)=\sum_{n=1}^Nn^{-s}$ are the partial sums of the zeta function. 
		
		For every $\varepsilon>0$, there exists $N>0$ and $T_0(\veps,\, N,\, \sigma_0,\,  \sigma_1)\geq 1$, such that for every $T\geq T_0$
		\begin{align*}
		\left|\frac{1}{T}\int_0^T|\zeta(\sigma+it)|^2\,dt-\sum_{n\geq 1}n^{-2\sigma}\right|^\frac{1}{2}&\leq \left(\frac{1}{T}\int_0^T|\zeta(\sigma+it)-\zeta_N(\sigma+it)|^2\,dt\right)^{\frac{1}{2}}+\varepsilon\\
		&\lesssim\left(\frac{1}{T}\int_0^T|\eta_N(\sigma+it)|^2\,dt\right)^{\frac{1}{2}}+\varepsilon\\
		&\leq 3\varepsilon.\qedhere
		\end{align*}
	\end{proof}
	
	\section{Examples of strongly universal convergent Dirichlet series }\label{sec:examples}
	
	We shall use Theorem~\ref{thm:ourlandau} to give a large class of general Dirichlet series belonging to some $\dwa(\sigma_0)$. We first prove Theorem \ref{thm:main}.
	\begin{proof}[Proof of Theorem \ref{thm:main}]
		Landau's theorem implies that $D\in \dwa((\sigma_c(D)+\sigma_a(D))/2)$ and by assumption $D\in \ddens$. Hence the result follows from Theorem \ref{thm:bayrearrangement}.
	\end{proof}
	
	If we know more precisely the growth of both $(a_n)$ and $(\lambda_n)$, one can replace \eqref{eq:thmmain} by a condition involving only the frequency $(\lambda_n)$.

	\begin{corollary} \label{cor:main}
		Let $(\lambda_n)$ be a frequency, let $D(s)=\sum_{n\geq 1}a_n e^{-\lambda_n s}$ be a Dirichlet series and let $d>0$. Assume that 
		\begin{itemize}
			\item[(a)]  The sequence $(a_n)$ satisfies 
			$$\lim_{n\to+\infty}\frac{\log(|a_n|)}{\log(n)}=d-1.$$
			\item[(b)] The frequency $(\lambda_n)$ is $\mathbb Q$-linearly independent, satisfies (WLC) and 
			$$\lim_{n\to+\infty}\frac{\lambda_n}{\log(n)}=d.$$
			\item[(c)] For all $\alpha,\beta>0,$ there exist $C>0$ and $x_0\geq 1$ such that, for all $x\geq x_0,$ 
			$$\textrm{card}\left(\left\{n\in\mathbb N:\ \lambda_n \in\left[x,x+\frac{\alpha}{x^2}\right]\right\}\right)\geq Ce^{x\left(\frac 1d-\beta\right)}.$$\label{ccc}
		\end{itemize}
		Then $D$ is strongly universal in $\{(\sigma_c(D)+1)/2<\Re e(s)<1\}$. 
	\end{corollary}
	\begin{proof}
		The assumptions easily imply that $\sigma_a(D)=1$. It remains to verify   
		that condition \eqref{eq:thmmain} is satisfied. 
		Let $\alpha,\beta>0$ and let $\veps>0$ be very small. Then, provided $\lambda_n\in[x,x+\alpha/x^2]$ and $x$ is large enough,
		$$n\geq e^{\frac x{d+\veps}}$$
		so that 
		$$|a_n|\geq e^{\frac{d-1-\veps}{d+\veps}x}.$$
		Therefore,
		$$\sum_{\lambda_n \in \left[x,x+\frac{\alpha}{x^2}\right]}|a_n|\geq \exp\left(x\left(1-\frac{1+2\veps}{d+\veps}\right)\right)\textrm{card}\left(\left\{n\in\mathbb N:\ \lambda_n \in\left[x,x+\frac{\alpha}{x^2}\right]\right\}\right).$$
		We use condition (c) with $\alpha$ and with $\beta_0>0$ very small and we get 
		\begin{align*}
		\sum_{\lambda_n \in \left[x,x+\frac{\alpha}{x^2}\right]}|a_n|&\geq C \exp\left(x\left(1-\left(\frac{1+2\veps}{d+\veps}-\frac 1d+\beta_0\right)\right)\right)\\
		&\geq C e^{x(1-\beta)}
		\end{align*}
		provided $\beta_0>0$ and $\veps>0$ are small enough.
	\end{proof}
	
	We now provide examples of frequencies $(\lambda_n)$ satisfying Condition (c) of Corollary \ref{cor:main}. But first, let us state the following lemma proved in \cite[Lemma~6.1]{Bayrearrangement}:
	\begin{lemma}\label{lemma61}
		Let $P(X)=\sum_{k=0}^{d}b_kX^k$ be a polynomial of degree $d$, with $b_d>0$. Then, there exist $x_0,\,y_0>0$ such that $P$ induces a bijection from $[x_0,+\infty]$ to $[y_0,+\infty]$, and
		\[P^{-1}(x)=\frac{x^{1/d}}{(b_d^{1/d})}-\frac{b_{d-1}}{b_d^{(d-1)/d}}+o(1),\]
		as $x\rightarrow+\infty$.
	\end{lemma}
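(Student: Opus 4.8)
The plan is to treat the two assertions separately. The statement that $P$ restricts to a bijection on a half-line is a soft monotonicity argument: since $b_d>0$, the derivative $P'$ has degree $d-1$ and positive leading coefficient $db_d$, hence $P'(x)\to+\infty$ as $x\to+\infty$, so we may pick $x_0>0$ with $P'>0$ on $[x_0,+\infty)$. Then $P$ is continuous and strictly increasing there, and $P(x)\to+\infty$, so $P$ maps $[x_0,+\infty)$ bijectively onto $[y_0,+\infty)$ with $y_0:=P(x_0)$; consequently $P^{-1}$ is well defined, continuous and increasing on $[y_0,+\infty)$. (For $d=1$ the same is trivially true since $P$ is affine with positive slope.)

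For the asymptotic expansion I would run a standard bootstrap (successive substitution). Write $y:=P^{-1}(x)$, so that $P(y)=x$ and $y\to+\infty$ as $x\to+\infty$. From $x=b_dy^d\bigl(1+O(1/y)\bigr)$ one first extracts the crude estimate $y\sim (x/b_d)^{1/d}$; in particular there are constants $0<c_1\le c_2$ with $c_1x^{1/d}\le y\le c_2x^{1/d}$ for $x$ large, which allows every lower-order power of $y$ to be replaced by the corresponding power of $x$ up to $o(1)$. Next I would isolate the leading term, $b_dy^d=x-b_{d-1}y^{d-1}+O(y^{d-2})$, divide by $b_d$, take the $d$-th root, and expand with $(1+u)^{1/d}=1+u/d+O(u^2)$ applied to $u=-\tfrac{b_{d-1}}{b_d}y^{1-d}+O(y^{-2})=O(x^{-1/d})$. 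Feeding the crude estimate $y^{d-1}=(x/b_d)^{(d-1)/d}(1+o(1))$ into the linear correction term then yields the constant term of the expansion together with an error that tends to $0$, which is the displayed asymptotics for $P^{-1}$.

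I do not expect a genuine obstacle here: the only point needing care is the bookkeeping of error terms — verifying that the contribution of $b_{d-2}X^{d-2}+\cdots+b_0$ and of the quadratic remainder in the Taylor expansion of the $d$-th root are really $o(1)$ after dividing by $b_d$ and extracting the root. Given the two-sided bound $c_1x^{1/d}\le y\le c_2x^{1/d}$ this is routine, so the statement is a purely elementary asymptotic-inversion fact; alternatively one could set $y=1/w$, $t=x^{-1/d}$, observe that the relation $P(1/w)=t^{-d}$ defines $w$ as an analytic function of $t$ near $0$ by the implicit function theorem, and read off the first two coefficients of its power series — but the bootstrap is entirely sufficient.
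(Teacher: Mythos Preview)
Your approach is correct and entirely standard for this type of asymptotic inversion. Note, however, that the present paper does not actually prove this lemma: it is quoted verbatim from \cite[Lemma~6.1]{Bayrearrangement} and used as a black box. So there is no proof in the paper to compare against; your bootstrap argument (or the implicit-function-theorem alternative you sketch) is exactly the kind of elementary argument one would supply.

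One small caution worth flagging: if you carry your computation through to the end, the constant term you obtain is $-\dfrac{b_{d-1}}{d\,b_d}$, not $-\dfrac{b_{d-1}}{b_d^{(d-1)/d}}$ as printed in the statement (check e.g.\ $P(X)=X^2+X$, where $P^{-1}(x)=\sqrt{x}-\tfrac12+o(1)$). This appears to be a typo in the displayed formula. It is harmless for the paper's purposes: every application of the lemma here (Propositions~\ref{prop:ddens} and~\ref{prop:pole2}, Lemma~\ref{lem:analyticcontinuation}) uses only the shape $P^{-1}(x)=b_d^{-1/d}x^{1/d}+c+o(1)$ for some real constant $c$, never the precise value of $c$.
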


	\begin{example}\label{ex:ddens}
		Let $d\geq 1,$
		let $P\in\mathbb R[X]$ with $\deg(P)=d,$ $\lim_{+\infty}P=+\infty$ and let $\lambda_n=\log(P(n))$ for $n\geq n_0,$ where $P\geq 0$ on $[n_0,+\infty)$. Then for all $\alpha,\beta>0,$ there exist $C>0$ and $x_0\geq 1$ such that, for all $x\geq x_0,$ 
		$$\textrm{card}\left(\left\{n\in\mathbb N:\ \lambda_n \in\left[x,x+\frac{\alpha}{x^2}\right]\right\}\right)\geq Ce^{x\left(\frac 1d-\beta\right)}.$$
	\end{example}
	\begin{proof}
		Let $\alpha,\beta>0$. Without loss of generality, we may assume that $P$ is one-to-one on $[n_0,+\infty)$. Then 
		\[\lambda_n\in\left[x,x+\frac\alpha{x^2}\right]\qquad\text{if and only if}\qquad n\in \left[P^{-1}(e^x),P^{-1}(e^{x+\frac\alpha{x^2}})\right].\]
		Using Lemma~\ref{lemma61}, there exist $c_1>0,\,c_2\in\mathbb{R}$ such that for small $\varepsilon>0$ and for every $x$ sufficiently large:
		\[ n\in \left[c_1e^{\frac xd}+c_2+\varepsilon,c_1 e^{\frac xd+\frac{\alpha}{dx^2}}+c_2-\varepsilon\right]\qquad\text{implies that}\qquad\lambda_n\in \left[x,x+\frac{\alpha}{x^2}\right].\]
		This yields the result since 
		$$\textrm{card}\left(\left\{n:\ \lambda_n\in\left[x,x+\frac{\alpha}{x^2}\right]\right\}\right)\gtrsim e^{\frac xd}\left(e^{\frac \alpha{x^2}}-1\right)\gtrsim \frac{e^{\frac xd}}{x^2}\gtrsim e^{x(\frac{1}{d}-\beta)}.$$
	\end{proof}
	
	Corollary~\ref{exten} now follows from Corollary \ref{cor:main} and Example \ref{ex:ddens}.
	Moreover, if $(a_n)$ is a sequence of positive real numbers such that, for all $\veps>0$,
	the sequence $(a_n (P(n))^{-(1+\veps-1/d)})$ is eventually nonincreasing, a summation by parts yields $\sigma_c(D)=1-1/d$
	where $D(s)=\sum_{n}e^{i\omega n}a_n (P(n))^{-s},$ $\omega\in\mathbb R\backslash 2\pi\mathbb Z$. We therefore get as a particular case Theorem A.
	
	This also allows us to give a new proof of the universality of the alternating Hurwitz zeta function.
	\begin{corollary}
		The Dirichlet series $\sum_{n\geq 1}e^{i\omega n} (n+\alpha)^{-s},$ where $\omega\in\mathbb{R}\setminus2\pi\mathbb{Z}$ and $\alpha$ is transcendental, is strongly universal in $\{1/2<\Re e(s)<1\}$. 
	\end{corollary}

	We now handle the case of frequencies induced by the sequence of prime numbers.
	\begin{example}\label{ex:ddensprime}
		Let $d\geq 1,$ let $P\in\mathbb R_d[X]$ with $\deg(P)=d$ and $\lim_{+\infty}P=+\infty$ and let $\lambda_n=\log(P(p_n))$ for $n\geq n_0,$ where $P\geq 0$ on $[p_{n_0},+\infty)$. Then for all $\alpha,\beta>0,$ there exist $C>0$ and $x_0\geq 1$ such that, for all $x\geq x_0,$ 
		$$\textrm{card}\left(\left\{n\in\mathbb N:\ \lambda_n \in\left[x,x+\frac{\alpha}{x^2}\right]\right\}\right)\geq Ce^{x\left(\frac 1d-\beta\right)}.$$
	\end{example}
	\begin{proof}
		Arguing as above, we know that there exist $c_1>0,$ $c_2\in\mathbb R$ such that, 
		for small $\varepsilon>0$ and for every $x$ sufficiently large:
		\[ p_n\in \left[c_1e^{\frac xd}+c_2+\varepsilon,c_1 e^{\frac xd+\frac{\alpha}{dx^2}}+c_2-\varepsilon\right]\qquad\text{implies that}\qquad\lambda_n\in \left[x,x+\frac{\alpha}{x^2}\right].\]
		By Hadamard - De la Vallée Poussin's estimate, we also know that 
		\begin{align*}
		\Pi(u)&:=\textrm{card}(\{n:\ p_n\leq u\})\\
		&=\int_2^u \frac{dt}{\log(t)}+O\left(ue^{-c\sqrt{\log u}}\right).
		\end{align*}
		Therefore
		\begin{align*}
		\textrm{card}\left(\left\{n:\ p_n\in \left[c_1e^{\frac xd}+c_2+\varepsilon,c_1 e^{\frac xd+\frac{\alpha}{dx^2}}+c_2-\varepsilon\right]\right\}\right)
		&\gtrsim \int_{c_1e^{\frac xd}-c_2+\varepsilon}^{c_1 e^{\frac xd+\frac{\alpha}{dx^2}}-c_2-\varepsilon} \frac{dt}{\log(t)}+O\left(e^{\frac xd-c\sqrt{x}}\right)\\
		&\gtrsim e^{\frac xd}\frac{e^{\frac{\alpha}{dx^2}}-1}{x}+O\left(e^{\frac xd-c'\sqrt{x}}\right)\\
		&\gtrsim \frac{e^{\frac{x}{d}}}{x^{3}}\\
		&\gtrsim e^{(\frac{1}{d}-\beta)x}.
		\end{align*}		
	\end{proof}
	
	Again Corollary~\ref{exprime} follows immediately from Corollary~\ref{cor:main} and Example~\ref{ex:ddensprime}.
	
	\begin{corollary}
		The Dirichlet series $\sum_{n\geq 1}e^{i\omega n} p_n^{-s},\,\omega\in\mathbb{R}\setminus2\pi\mathbb{Z}$, is strongly universal in $\{1/2<\Re e(s)<1\}$. 
	\end{corollary}
	
	Thus, the alternating prime zeta function is strongly universal on the critical strip; this gives a positive answer to a question posed by the first author in \cite{Bayrearrangement}.
	
	\section{Proof of Theorem \ref{thm:main2}}\label{5}
	We have to face a new difficulty since $D$ will now be defined via an analytic continuation. We need to understand how to define this analytic continuation and how close it is to the partial sums of $D$ in order to be able to show that D satisfies conditions (c) and (d) of $\dwa$.
	
	\begin{lemma}\label{lem:analyticcontinuation}
		Let $d\geq 1,$ let $P\in\mathbb R[X]$ with $\deg(P)=d$ and $\lim_{+\infty}P=+\infty,$ let $Q\in\mathbb R[X]$ with $\deg(Q)=d-1$ and let $\kappa\in\mathbb R$.
		Then the Dirichlet series $D(s)=\sum_n Q(n)(\log n)^\kappa (P(n))^{-s}$ admits a holomorphic continuation
		to $\CC^+_{1-\frac 1d}\cup\CC_1$ and even to $\CC_{1-\frac 1d}\backslash\{1\}$ provided $\kappa\in\mathbb N_0.$ Moreover, let $\sigma_1>1-\frac 1d$ and $\sigma_2>1.$
		\begin{enumerate}[(a)]
			\item There exist $t_0,\ B>0$ such that, for all $s=\sigma+it$ with $\sigma\geq \sigma_1$ and $t\geq t_0,$
			$$|D(s)|\leq t^B.$$
			\item There exist $\delta,\,\veps>0$ such that, for all $x>0,$ for all $s=\sigma+it$ with $\sigma\in[\sigma_1,\sigma_2]$
			and $1\leq t\leq  \delta x,$ 
			$$D(s)=\sum_{n=2}^x Q(n)(\log n)^\kappa (P(n))^{-s}+O(x^{-\veps})+O\left(\frac{(\log P(x))^\kappa}{(s-1)P(x)^{s-1}}\right)$$
			(here, the $O$-constants do not depend neither on $s$ nor on $x$).
		\end{enumerate}
	\end{lemma}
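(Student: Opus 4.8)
The plan is to obtain the analytic continuation of $D$ by comparing the Dirichlet series with an integral, using the Euler–Maclaurin summation formula (or, equivalently, partial summation against $\Pi$-type counting), and to identify the polar part coming from the integral $\int^{\infty} Q(u)(\log u)^\kappa P(u)^{-s}\,du$. Concretely, I would first reduce to the model case: since $P$ has degree $d$ and leading coefficient $b_d>0$, Lemma~\ref{lemma61} gives $P(u)^{-s} = b_d^{-s}u^{-ds}\big(1+O(1/u)\big)^{-s}$ for large $u$, and expanding the parenthesis shows that $Q(n)(\log n)^\kappa P(n)^{-s}$ differs from a finite linear combination of terms of the shape $c\,n^{j}(\log n)^{\kappa'}n^{-ds}$ (with $j\le d-1$, so $j-ds$ has real part $<-1$ exactly when $\Re e(s)>\frac{j+1}{d}$, the worst case $j=d-1$ giving the abscissa $1-\frac1d$, wait—this is $\sigma_a$; the continuation goes further) by a series that already converges absolutely for $\Re e(s)>1-\frac1d-\eta$ for some $\eta>0$. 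For each model term one knows the meromorphic continuation of $\sum_n n^{j-ds}(\log n)^{\kappa'}$: it is essentially a derivative (in the $s$-variable) of the Hurwitz/Riemann zeta function $\zeta(ds-j)$, which is entire except for a single simple pole at $ds-j=1$, i.e. at $s=\frac{j+1}{d}$, and the only such pole inside $\CC_{1-\frac1d}$ is the one from $j=d-1$, located at $s=1$. When $\kappa\in\NN_0$ the logarithmic factors are genuine $s$-derivatives and one gets a pole of finite order at $s=1$; for general $\kappa$ the factor $(\log n)^\kappa$ forces a branch cut emanating from $s=1$, which is why in that case one only continues to $\CC^+_{1-\frac1d}\cup\CC_1$ (a simply connected region avoiding the cut). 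This establishes the continuation statement.

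For part (a), the order estimate $|D(\sigma+it)|\le t^B$ for $\sigma\ge\sigma_1>1-\frac1d$ and $t$ large follows from the same decomposition together with the classical convexity/subconvexity polynomial bounds for $\zeta$ (and its $s$-derivatives) in vertical strips—any polynomial-in-$t$ bound suffices here, so even the trivial functional-equation-plus-Phragmén–Lindelöf estimate is enough. One has to be slightly careful that the exponent $B$ can be taken uniform on $[\sigma_1,\sigma_2]$ and that the remainder series contributes only $O(1)$; both are routine once the decomposition is in place.

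Part (b) is the heart of the lemma and, I expect, the main obstacle. The idea is Euler–Maclaurin: for $\Re e(s)>1$ one writes, for the model term,
$$\sum_{n>x} n^{j-ds}(\log n)^{\kappa'} = \int_x^{\infty} u^{j-ds}(\log u)^{\kappa'}\,du + \text{(boundary + error terms)},$$
the leading integral being $\sim \frac{x^{j+1-ds}(\log x)^{\kappa'}}{ds-j-1}$, which after recombining the model terms and undoing the reduction becomes precisely $\dfrac{(\log P(x))^\kappa}{(s-1)P(x)^{s-1}}$ up to lower-order contributions absorbed into $O(x^{-\veps})$; the remaining Euler–Maclaurin corrections are $O\big(x^{j-d\sigma}\cdot\mathrm{poly}(t,\log x)\big)$, and the constraint $1\le t\le\delta x$ with $\delta$ small is exactly what is needed to beat the factors of $t$ against the power $x^{j-d\sigma_1+1}$ (which is a negative power of $x$ since $\sigma_1>1-\frac1d$) so that everything is swallowed by a single $O(x^{-\veps})$. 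Then I extend this identity from $\Re e(s)>1$ to the strip $\sigma\in[\sigma_1,\sigma_2]$ by analytic continuation: both sides are holomorphic in $s$ there (the left-hand side by part of this very lemma, the right-hand side visibly so away from $s=1$, and near $s=1$ the two $O(\cdot)$ terms combine to stay bounded), so the identity persists. The delicate points to get right are the uniformity of all constants in $s$ and $x$, the bookkeeping of the finitely many model terms, and checking that the $\delta x$ range is genuinely large enough—i.e. that the error from Euler–Maclaurin at the $k$-th stage, roughly $t^k x^{j-d\sigma-k}$, stays small for $t\le\delta x$; taking enough terms of Euler–Maclaurin (depending only on $\sigma_1,\sigma_2,d$) makes the net exponent negative and yields the claimed $O(x^{-\veps})$.
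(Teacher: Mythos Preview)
Your strategy for the analytic continuation and part (a)---expanding $P(n)^{-s}=b_d^{-s}n^{-ds}(1+O(1/n))^{-s}$ and reducing to zeta-type sums $\sum_n n^{j}(\log n)^\kappa n^{-ds}$---is different from the paper's route and could in principle be made to work, though you should note that the binomial coefficients in that expansion are polynomials in $s$ (not constants), and that for non-integer $\kappa$ you must still analyse the continuation of $\sum_n(\log n)^\kappa n^{-w}$ separately. The paper instead writes $Q=cP'+Q_1$ with $\deg Q_1\le d-2$, performs the substitution $v=P(u)$ in $\int_N^\infty P'(u)(\log u)^\kappa P(u)^{-s}\,du$, and recognises the result as an incomplete Gamma function $\frac{b_d^{s-1}}{d^\kappa(s-1)^{\kappa+1}}\Gamma(\kappa+1,(s-1)\log(b_dP(N)))$; this makes both the location of the branch cut and the asymptotic $O$-term in (b) transparent via $\Gamma(a,z)=O(z^{a-1}e^{-z})$.

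Part (b), however, has a genuine gap in your argument. The Euler--Maclaurin remainder after $k$ steps is the integral $R_k=\frac{1}{k!}\int_x^\infty \tilde B_k(u)\,\phi^{(k)}(u)\,du$, not just the boundary value $\phi^{(k)}(x)$. Since $|\phi^{(k)}(u)|\lesssim |s|^k u^{d-1-d\sigma-k}(\log u)^\kappa$, integration gives $|R_k|\lesssim |s|^k x^{d-d\sigma-k}(\log x)^\kappa$ (note the extra $+1$ in the exponent). With $|s|\lesssim t\le\delta x$ this becomes $\lesssim\delta^k x^{d(1-\sigma)}(\log x)^\kappa$, whose $x$-exponent $d(1-\sigma)$ is \emph{positive} throughout the subrange $\sigma_1\le\sigma<1$ and is independent of $k$: no number of Euler--Maclaurin steps makes it negative. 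Your parenthetical that $x^{j-d\sigma_1+1}$ is a negative power of $x$ is exactly this miscalculation---for $j=d-1$ it equals $x^{d-d\sigma_1}$, which is positive whenever $\sigma_1<1$. The paper bypasses this obstruction by applying the Van der Corput lemma (Lemma~\ref{lem:vandercorput}) to $\sum_{x<n\le N}g(n)e^{2\pi i f(n)}$ with $g(u)=Q(u)(\log u)^\kappa P(u)^{-\sigma}$ and $f(u)=-\frac{t}{2\pi}\log P(u)$: the hypothesis $|f'|<\tfrac12$ is precisely the condition $t\le\delta x$, and the crucial gain is that the error $O(g(x)+|g'(x)|)=O(x^{d-1-d\sigma})=O(x^{-\veps})$ is independent of $t$. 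One then lets $N\to\infty$ to kill the $O(|s|N^{-\veps})$ terms and obtains (b).
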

	\begin{proof}
		As in the classical case of the Riemann zeta function, see for example \cite{BM09}, our plan is to use the regularity and smoothness of the coefficients and the frequencies of our Dirichlet series $D$ to estimate its order and how close the partial sums approximate $D$. We will rely again on the principle of non-stationary phase, that is Lemma~\ref{lem:vandercorput}. But first we need to deal with some technical difficulties that arise from the "unknown" polynomials $P$ and $Q$. 
		We start with $s=\sigma+it$, $\sigma>1$ and let $N\geq 1.$ We write
		$$D(s)=\sum_{n=2}^{N-1}Q(n)(\log n)^\kappa (P(n))^{-s}+\sum_{n=N}^{+\infty}Q(n)(\log n)^\kappa (P(n))^{-s}$$
		and we apply Euler's summation formula (see \cite[(11.3)]{BM09}). Setting 
		$$\phi(u)=Q(u)(\log u)^\kappa (P(u))^{-s}\textrm{ and }\rho(u)=u-\lfloor u\rfloor-\frac 12,$$
		we get 
		\begin{equation}\label{eq:analytic1}
		D(s)=\sum_{n=2}^{N-1}Q(n)(\log n)^\kappa (P(n))^{-s}+\int_N^{+\infty}\phi(u)du+\int_{N}^{+\infty}\rho(u)\phi'(u)du+\frac 12\phi(N).
		\end{equation}
		These integrals are convergent when $s\in\CC_1.$ Moreover it is easy to check that there exists $\veps>0$ such that, provided $s=\sigma+it$ with $\sigma\geq \sigma_1>1-\frac{1}{d},$ for any $u>2,$
		$$|\phi(u)|\lesssim u^{-\veps}\textrm{ and }|\phi'(u)|\lesssim |s| u^{-1-\veps}.$$
		In particular, the last integral in \eqref{eq:analytic1} defines a holomorphic function in $\CC_{1-\frac 1d}$
		which is $O(|s|N^{-\veps})$ in $\mathbb C_{\sigma_1}$.  Let us now see how to control the first integral. Up to multiply $Q$ by some constant, we may
		write it $Q(u)=P'(u)+Q_1(u)$ with $\deg(Q_1)\leq d-2.$
		As above, the integral $\int_N^{+\infty}Q_1(u)(\log u)^\kappa (P(u))^{-s}du$ 
		defines an analytic function in $\CC_{1-\frac 1d}$ which is $O(N^{-\veps})$. Therefore we have obtained so far that $D$ may be written in $\CC_1$
		$$D(s)=\sum_{n=2}^{N-1}Q(n)(\log n)^\kappa (P(n))^{-s}+\int_N^{+\infty}\frac{P'(u)(\log u)^\kappa}{(P(u))^s}du+R_N(s)$$
		where $R_N$ is analytic in $\CC_{1-\frac 1d}$ and $|R_N(s)|\lesssim |s|N^{-\veps}$ uniformly for $\sigma\geq\sigma_1.$
		
		We choose $N$ sufficiently large such that $P$ is one-to-one on $[N,+\infty)$. By change of variables we obtain:
		$$\int_N^{+\infty}\frac{P'(u)(\log u)^\kappa}{(P(u))^s}du=\int_{P(N)}^{+\infty}\frac{(\log P^{-1}(u))^\kappa}{u^s}du.$$
		
		By Lemma~\ref{lemma61} we have the following formula:
		\[P^{-1}(u)=a_d u^{1/d}(1+\veps_1(u))\textrm{ with }|\veps_1(u)|\lesssim u^{-1/d},\]
		where $a_d>0$.
		Therefore,
		$$(\log P^{-1}(u))^\kappa=\log^\kappa(a_d u^{1/d})+\veps_2(u)$$
		with 
		$$|\veps_2(u)|\lesssim u^{-1/d}\log^{\kappa-1}(u).$$
		As before, the integral $\int_{P(N)}^{+\infty}\veps_2(u)u^{-s}du$ defines an analytic function in the half-plane $\mathbb C_{1-\frac 1d}$ which is $O(P(N)^{-\veps})$ in $\mathbb C_{\sigma_1}$.
		On the other hand, setting $b_d=a_d^d$ and restricting ourselves to $s\in\mathbb{C}_1,$ we may write
		\begin{align*}
		\int_{P(N)}^{+\infty} \frac{\log^\kappa(a_d u^{1/d})}{u^s}du&=
		\int_{P(N)}^{+\infty} \frac 1{d^\kappa} \frac{\log^\kappa(b_d u)}{u^s} du\\
		&=\frac{b_d^{s-1}}{d^\kappa}\int_{b_d P(N)}^{+\infty}\frac{\log^\kappa(v)}{v^s}dv&\quad (v=b_du)\\
		&=\frac{b_d^{s-1}}{d^\kappa}\int_{\log(b_d P(N))}^{+\infty} 
		y^\kappa e^{(1-s)y}dy&\quad (y=\log v)\\
		&=\frac{b_d^{s-1}}{d^\kappa(s-1)^{\kappa+1}} \Gamma(\kappa+1,(s-1)\log(b_d P(N))).
		\end{align*}
		Hence we have shown that for $s\in\CC_1,$
		we may write
		\begin{align*}
		D(s)&=\sum_{n=2}^{N-1}Q(n)(\log n)^\kappa (P(n))^{-s}+\widetilde{R_N}(s)+\\
		&\quad\quad \frac{b_d^{s-1}}{d^\kappa(s-1)^{\kappa+1}} \Gamma(\kappa+1,(s-1)\log(b_d P(N)))
		\end{align*}
		where $\widetilde{R_N}(s)$ is holomorphic in $\CC_{1-\frac 1d}$ and is $O(|s|N^{-\veps})+O(P(N)^{-\veps})$ in $\mathbb C_{\sigma_1}.$
		Since we know that $\Gamma(\kappa+1,\cdot)$ admits an analytic continuation to $\mathbb C\backslash\mathbb R_-$
		we can conclude to the analytic continuation of $D$ to $\CC_{1-\frac 1d}^+\cup\CC_1$. When $\kappa\in\mathbb N,$
		the analytic continuation even holds on $\CC_{1-\frac 1d}\backslash\{1\}.$
		The estimation (a) (which is trivial for $\sigma\geq \sigma_2>1$) follows easily for $\sigma\in[\sigma_1,\sigma_2]$
		by what we already know on $\widetilde{R_N}$ and by \eqref{eq:ineggamma}.

		Let us turn to the proof of (b).
		Choosing $N\geq x,$ we may write
		\begin{align*}
		D(s)&=\sum_{n=2}^{x}Q(n)(\log n)^\kappa (P(n))^{-s}+\sum_{n=x+1}^{N}Q(n)(\log n)^\kappa (P(n))^{-s}+\\
		&\quad\quad \frac{b_d^{s-1}}{d^\kappa(s-1)^{\kappa+1}} \Gamma(\kappa+1,(s-1)\log(b_d P(N)))+O(|s|N^{-\veps})+O(P(N)^{-\veps}).
		\end{align*}
		We apply Lemma \ref{lem:vandercorput} to the second sum with 
		$$g(u)=Q(u)\log^\kappa(u)(P(u))^{-\sigma},\ f(u)=\frac{-t\log(P(u))}{2\pi}.$$
		Observe that, for $\sigma\in[\sigma_1,\sigma_2]$ and $u\in[x,N],$ provided $t\leq \delta x$ with $\delta$ small enough,
		$$|g(u)|\lesssim x^{-\veps},\ |g'(u)|\lesssim \sigma x^{-1-\veps}\leq x^{-\veps},\ |f'(u)|\leq \frac 12.$$
		Hence, 
		\begin{align*}
		D(s)&=\sum_{n=2}^{x}Q(n)(\log n)^\kappa (P(n))^{-s}+\int_x^N \frac{Q(u)\log^\kappa(u)}{(P(u))^s}du+\\
		&\quad\quad  \frac{b_d^{s-1}}{d^\kappa(s-1)^{\kappa+1}} \Gamma(\kappa+1,(s-1)\log(b_d P(N)))+O(x^{-\veps}+|s|N^{-\veps}+P(N)^{-\veps}).
		\end{align*}
		We let $N\rightarrow+\infty$, yielding to:
		\[D(s)=\sum_{n=2}^{x}Q(n)(\log n)^\kappa (P(n))^{-s}+\int_x^{+\infty} \frac{Q(u)\log^\kappa(u)}{(P(u))^s}du+O(x^{-\veps}),\qquad s\in\mathbb{C}_1.\]
		
		Now writing $Q(u)=P'(u)+Q_1(u)$ and repeating the argument in the first part of this proof one can obtain the following identity
		$$D(s)=\sum_{n=2}^{x}Q(n)(\log n)^\kappa (P(n))^{-s}+ \frac{b_d^{s-1}}{d^\kappa(s-1)^{\kappa+1}} \Gamma(\kappa+1,(s-1)\log(b_d P(x)))+\widetilde{R}(s),$$
		where $\widetilde{R}(s)$ is holomorphic in $\CC_{1-\frac 1d}$ and is $O(x^{-\varepsilon})$ in $\mathbb C_{\sigma_1}.$
		Using one last time \eqref{eq:ineggamma}, we obtain (b) of Lemma \ref{lem:analyticcontinuation}.
	\end{proof}

	From this and Lemma \ref{lem:MV}, we may deduce the first half of Theorem \ref{thm:main2}.
	\begin{proposition}\label{prop:pole1}
		Let $d\geq 1$, let $P\in\mathbb R[X]$ with $\deg(P)=d$ and $\lim_{+\infty}P=+\infty,$ let $Q\in\mathbb R[X]$ with $\deg(Q)=d-1$ and let $\kappa\in\mathbb R$.
		Assume moreover that $(\log(P(n)))$ is $\mathbb Q$-linearly independent.
		Then the Dirichlet series $D(s)=\sum_n Q(n)(\log n)^\kappa (P(n))^{-s}$ belongs to $\dwa(\sigma_0)$ with $\sigma_0=(2d-1)/2d.$
	\end{proposition}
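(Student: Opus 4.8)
The goal is to verify the five conditions in Definition~\ref{def:dwa} for $D$ with $\sigma_0=1-\frac1{2d}$. Condition~(5) is the hypothesis that $(\log(P(n)))$ is $\mathbb{Q}$-linearly independent, and condition~(1), the holomorphic continuation to $\CC_{\sigma_0}^+\cup\CC_1$, together with condition~(3), the polynomial bound $|D(\sigma+it)|\le t^B$ for $\sigma\ge\sigma_1$, are exactly what Lemma~\ref{lem:analyticcontinuation} gives us (parts~(a) and the continuation statement). For condition~(2), $\sigma_2(D)\le\sigma_0$, we note that $|Q(n)(\log n)^\kappa|\asymp n^{d-1}(\log n)^\kappa$ (up to logs) and $P(n)\asymp n^d$, so $\sum_n |Q(n)(\log n)^\kappa|^2 (P(n))^{-2\sigma}$ converges precisely when $2(d-1)-2d\sigma<-1$, i.e.\ $\sigma>1-\frac1{2d}=\sigma_0$; hence $\sigma_2(D)=\sigma_0$, and in particular the strict inequality $\sigma_0>\sigma_2(D)$ needed to invoke Theorem~\ref{thm:bayrearrangement} will actually fail at equality, so one must be slightly careful — but Definition~\ref{def:dwa} only requires $\sigma_2(D)\le\sigma_0$, which holds.

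\textbf{The main work: the second-moment bound, condition~(4).} Fix $\sigma_2>\sigma_1>\sigma_0$ and $\sigma\in[\sigma_1,\sigma_2]$; we must show $\frac1T\int_1^T|D(\sigma+it)|^2\,dt$ is bounded uniformly in $T$. The plan is to use part~(b) of Lemma~\ref{lem:analyticcontinuation} with a well-chosen truncation level $x$ depending on $T$. Taking $x\asymp T/\delta$ so that the constraint $1\le t\le\delta x$ is satisfied for all $t\in[1,T]$, we get
$$D(\sigma+it)=\sum_{n=2}^{x}Q(n)(\log n)^\kappa (P(n))^{-\sigma-it}+O(x^{-\veps})+O\!\left(\frac{(\log P(x))^\kappa}{|s-1|\,P(x)^{\sigma-1}}\right).$$
The first error term contributes $O(x^{-2\veps})$ to the mean square, which is negligible. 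For the second error term, on $[1,T]$ we have $|s-1|\ge 1$, and since $\sigma-1\ge\sigma_1-1>-\frac1d$ we get $P(x)^{\sigma-1}\gtrsim x^{d(\sigma_1-1)}$ with $d(\sigma_1-1)>-1$; a short computation shows this term, squared and averaged, is also bounded (in fact it even tends to $0$ if $\sigma_1$ is close enough to $1$, but boundedness suffices for all $\sigma\ge\sigma_1$ after checking the exponent). The heart of the matter is the Dirichlet polynomial: by Lemma~\ref{lem:MV} applied to $\sum_{n=2}^x a_n(P(n))^{-\sigma}e^{-it\log P(n)}$ with frequencies $\lambda_n=\log P(n)$,
$$\int_0^T\Big|\sum_{n=2}^x Q(n)(\log n)^\kappa (P(n))^{-\sigma-it}\Big|^2dt = T\sum_{n=2}^x |Q(n)|^2(\log n)^{2\kappa}(P(n))^{-2\sigma}+O\!\Big(\sum_{n=2}^x \frac{|Q(n)|^2(\log n)^{2\kappa}(P(n))^{-2\sigma}}{\theta_n}\Big),$$
where $\theta_n=\inf_{m\ne n}|\log P(n)-\log P(m)|\ge\min(\log P(n)-\log P(n-1),\log P(n+1)-\log P(n))$. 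The first term is $\le T\sum_{n\ge2}|Q(n)|^2(\log n)^{2\kappa}(P(n))^{-2\sigma_1}$, a \emph{convergent} sum since $\sigma_1>\sigma_0=\sigma_2(D)$, giving the desired $O(T)$. For the error term, one uses $\log P(n+1)-\log P(n)\gtrsim (P(n+1)-P(n))/P(n)\gtrsim 1/n$ (since $P'(n)\asymp n^{d-1}$ and $P(n)\asymp n^d$), so $1/\theta_n\lesssim n$, whence $\sum_{n=2}^x |Q(n)|^2(\log n)^{2\kappa}(P(n))^{-2\sigma}/\theta_n\lesssim \sum_{n=2}^x n^{2(d-1)+1-2d\sigma}(\log n)^{O(1)}\lesssim x^{2(d-1)+2-2d\sigma}(\log x)^{O(1)}$, and with $x\asymp T$ and $\sigma\ge\sigma_1>1-\frac1{2d}$ the exponent $2d-2d\sigma<1$, so this is $O(x^{1-\eta})=o(T)$ for some $\eta>0$. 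Combining everything, $\int_1^T|D(\sigma+it)|^2dt\lesssim T$ uniformly for $\sigma\in[\sigma_1,\sigma_2]$, which is condition~(4).

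\textbf{Expected obstacle.} The delicate point is bookkeeping the exponents in the error terms of part~(b) of Lemma~\ref{lem:analyticcontinuation} — in particular making sure that the incomplete-Gamma term $\frac{b_d^{s-1}}{d^\kappa(s-1)^{\kappa+1}}\Gamma(\kappa+1,(s-1)\log(b_d P(x)))$, estimated via \eqref{eq:ineggamma} as $O((\log P(x))^\kappa P(x)^{1-\sigma}/|s-1|^?)$, really is controlled on the line $\Re e(s)=\sigma\in[\sigma_1,\sigma_2]$ for all $t\in[1,T]$: since $\Re e((s-1)\log(b_dP(x)))=(\sigma-1)\log(b_dP(x))$ can be positive (for $\sigma>1$) or negative (for $\sigma<1$), one must check the estimate \eqref{eq:ineggamma} is being applied in the right regime, i.e.\ that $|(s-1)\log(b_dP(x))|\to\infty$, which holds because $\log P(x)\to\infty$. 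Once the truncation level $x$ is tied to $T$ so that both the Van der Corput regime $t\le\delta x$ and the convergence exponent $\sigma_1>\sigma_2(D)$ are respected simultaneously, the rest is the routine Montgomery–Vaughan computation sketched above. No new idea beyond Lemmas~\ref{lem:MV} and~\ref{lem:analyticcontinuation} is required; the proof is essentially an assembly of those two tools.
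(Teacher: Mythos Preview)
Your proposal is correct and follows essentially the same route as the paper: verify conditions (1)--(3) and (5) via Lemma~\ref{lem:analyticcontinuation} and the hypothesis, compute $\sigma_2(D)=\sigma_0$, and establish (4) by choosing the truncation $x=T/\delta$ in Lemma~\ref{lem:analyticcontinuation}(b) and applying the Montgomery--Vaughan inequality. The only cosmetic difference is that the paper first bounds $\int_{T/2}^{T}$ (so that $|s-1|\asymp T$ makes the incomplete-Gamma error pointwise $O(T^{-\veps})$) and then sums dyadically over $T2^{-j}$, whereas you integrate over $[1,T]$ directly and absorb the $1/|s-1|$ factor via $\int_1^T t^{-2}\,dt<\infty$; both routes give the same $O(T)$ bound.
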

	\begin{proof}
		It is clear that $\sigma_2(D)=(2d-1)/2d$ and thus it just remains to prove (d) of Definition \ref{def:dwa}. We fix $T\geq 1$ and we first estimate $\int_{T/2}^T |D(\sigma+it)|^2dt$
		where $1-\frac 1{2d}<\sigma_1\leq \sigma\leq \sigma_2.$
		We apply the estimate given by Lemma \ref{lem:analyticcontinuation} with $x=T/\delta$ so that 
		$O(x^{-\veps})=O(T^{-\veps})$ and 
		$$\left|\frac{\log^\kappa(P(x))}{(s-1)P(x)^{s-1}}\right|\lesssim  \frac{\log^\kappa T}{T T^{d(\sigma-1)}}\lesssim T^{-\veps}.$$
		Hence, applying Lemma \ref{lem:MV}
		\begin{align*}
		\int_{T/2}^T |D(\sigma+it)|^2dt&\lesssim \int_{T/2}^T \left|\sum_{n=2}^{T/\delta} |Q(n) (\log n)^\kappa  (P(n))^{-s}\right|^2 dt+T^{1-2\veps}\\
		&\lesssim T \sum_{n=2}^{T/\delta} |Q(n)|^2 (\log n)^{2\kappa} |P(n)|^{-2\sigma}+\\
		&\quad\quad \sum_{n=2}^{T/\delta}\frac{ |Q(n)|^2 (\log n)^{2\kappa} |P(n)|^{-2\sigma}}{\log(P(n+1))-\log(P(n))}+T^{1-2\veps}.
		\end{align*}
		The first sum is dominated by some constant since $\sigma\geq \sigma_1>\sigma_2(D).$ Regarding the second sum, for $n\in[2,T/\delta],$
		$$\frac{ |Q(n)|^2 (\log n)^{2\kappa} |P(n)|^{-2\sigma}}{\log(P(n+1))-\log(P(n))}\lesssim T n^{2(d-1)-2d\sigma}(\log n)^{2\kappa}\lesssim T n^{2d(1-\sigma_1)-2}(\log n)^{2\kappa},$$
		and we get the estimate
		$$\sum_{n=2}^{T/\delta}\frac{ |Q(n)|^2 (\log n)^{2\kappa} |P(n)|^{-2\sigma}}{\log(P(n+1))-\log(P(n))}\lesssim T,$$
		since $2d(1-\sigma_1)<1.$ 
		Hence, we have obtained 
		$$\int_{T/2}^T |D(\sigma+it)|^2 dt\lesssim T,$$
		for all $T\geq 1$ and all $\sigma\in[\sigma_1,\sigma_2],$ where the involved constant does not depend neither on $\sigma$ nor on $T.$
		Taking $T2^{-j}$ instead of $T$ in the latter formula and summing over $j$, we get the proposition.
	\end{proof}
	
	The second half of the proof of Theorem \ref{thm:main2} has been proven in \cite[Proposition 6.2]{Bayrearrangement}, for the sake of completeness, we repeat the argument below.
	
	\begin{proposition}\label{prop:pole2}
		Let $P\in\mathbb R_d[X]$ with $\lim_{+\infty}P=+\infty,$ let $Q\in\mathbb R_{d-1}[X]$ and let $\kappa\in\mathbb R$.
		Then the Dirichlet series $D(s)=\sum_n Q(n)(\log n)^\kappa (P(n))^{-s}$ belongs to $\ddens $.
	\end{proposition}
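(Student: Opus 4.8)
The goal is to verify that $D=\sum_n Q(n)(\log n)^\kappa(P(n))^{-s}$ satisfies Definition~\ref{def:ddens}. The plan is to reduce the problem, via the change of frequency $\lambda_n=\log(P(n))$, to counting integers $n$ whose frequency falls in a short window $[x,x+\alpha/x^2]$, and then to estimate the contribution of those coefficients from below. First I would fix $\alpha,\beta>0$ and choose $N$ large enough that $P$ is one-to-one and positive on $[N,+\infty)$; only the tail $n\geq N$ matters since $\sigma_a(D)=1$ and a finite head does not affect a lower bound of the form $Ce^{(1-\beta)x}$. For $x$ large, $\lambda_n\in[x,x+\alpha/x^2]$ is equivalent to $P(n)\in[e^x,e^{x+\alpha/x^2}]$, i.e.\ $n\in[P^{-1}(e^x),P^{-1}(e^{x+\alpha/x^2})]$.

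Next I would invoke Lemma~\ref{lemma61} to write $P^{-1}(u)=a_d u^{1/d}+c_2+o(1)$ as $u\to\infty$, with $a_d=b_d^{-1/d}>0$. Applying this at $u=e^x$ and $u=e^{x+\alpha/x^2}$, the interval of admissible $n$ contains an interval of the form $[c_1 e^{x/d}+c_2+\veps,\ c_1 e^{x/d+\alpha/(dx^2)}+c_2-\veps]$ for $x$ large (absorbing the $o(1)$ terms into a small $\veps>0$), whose length is $c_1 e^{x/d}\big(e^{\alpha/(dx^2)}-1\big)+O(1)\asymp e^{x/d}/x^2$. Since consecutive integers are at distance $1$, this interval contains $\gtrsim e^{x/d}/x^2$ integers $n$, each of size $n\asymp e^{x/d}$, hence with $|a_n|=|Q(n)|(\log n)^\kappa\gtrsim n^{d-1}/(\log n)^{|\kappa|}\gtrsim e^{x(d-1)/d}/x^{d-1+|\kappa|}$ (using that $Q$ has degree $d-1$ with nonzero leading coefficient; if $\kappa<0$ one bounds $(\log n)^\kappa\gtrsim x^{\kappa}$). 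Multiplying the count by the pointwise lower bound on $|a_n|$ gives
$$\sum_{\lambda_n\in[x,x+\alpha/x^2]}|a_n|\gtrsim \frac{e^{x(d-1)/d}}{x^{d-1+|\kappa|}}\cdot\frac{e^{x/d}}{x^{2}}=\frac{e^{x}}{x^{d+1+|\kappa|}}\gtrsim e^{(1-\beta)x}$$
for $x$ large, which is exactly the inequality required by Definition~\ref{def:ddens}.

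The only subtlety—and the one place where a little care is needed—is the passage from ``the interval of real numbers has length $\gtrsim e^{x/d}/x^2$'' to ``it contains $\gtrsim e^{x/d}/x^2$ integers'': one must check that the endpoints, after absorbing the $o(1)$ from Lemma~\ref{lemma61} into $\veps$, still leave an interval of length at least, say, $2$, so that the number of integers in it is comparable to its length rather than possibly $0$ or $1$; this holds because $e^{x/d}/x^2\to\infty$. Note that, in contrast with Proposition~\ref{prop:ddens}, here the frequencies are indexed directly by $n$ rather than by $p_n$, so no prime-counting input is needed—the elementary counting of integers in an interval suffices, and this is what makes the argument short. This completes the proof; it is precisely the argument of \cite[Proposition~6.2]{Bayrearrangement}.
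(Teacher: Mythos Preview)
Your argument is correct and follows essentially the same route as the paper's proof: both invoke Lemma~\ref{lemma61} to show that the condition $\lambda_n\in[x,x+\alpha/x^2]$ is implied by $n$ lying in an interval of length $\asymp e^{x/d}/x^2$, count the integers therein, and multiply by the lower bound $|Q(n)(\log n)^\kappa|\gtrsim e^{(d-1)x/d}/x^{O(1)}$ to get $e^x$ over a polynomial in $x$, hence $\gtrsim e^{(1-\beta)x}$. Your version is a bit more explicit about the integer-counting step and the sign of $\kappa$, but the approach is the same.
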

	
	\begin{proof}
		Let $\alpha,\,\beta>0$. There exists $x_0>1$ such that for every $x\geq x_0$ the polynomial $P$ is positive and increasing and $Q$ behaves like its leading term. 
		By Lemma~\ref{lemma61} there exist constants $c_1>0,\,c_2\in\mathbb{R}$ such that
		\[n\in \left[c_1e^{\frac{x}{d}}-c_2+\varepsilon,c_1e^{\frac{x}{d}+\frac{\alpha }{dx^2}}-c_2-\varepsilon \right]\qquad\text{implies that}\qquad \lambda_n\in\left[x,x+\frac{\alpha}{x^2}\right].\]
		Thus
		\[\sum\limits_{\lambda_n\in\left[x,x+\frac{\alpha}{x^2}\right]}\left|Q(n)(\log n)^\kappa\right|\gtrsim  \frac{e^{\frac{x}{d}}}{x^2}e^{\frac{d-1}{d}x(1-\frac{\beta}{2})}\gtrsim e^{(1-\beta)x} .\]
	\end{proof}
	
	Theorem \ref{thm:main2} now follows from Proposition \ref{prop:pole1}, Proposition \ref{prop:pole2} and Theorem \ref{thm:bayrearrangement}.
	
	%%%%%%%%%%%%%%%%%%%%%%%%%%%%%%%%%%%%%%%%%%%%
	%%%%%%%%%%%%%%%%%%%%%%%%%%%%%%%%%%%%%%%%%%%%
	%%%%%%%%%%%%%%%%%%%%%%%%%%%%%%%%%%%%%%%%%%%%
	
	\section{Random models and further discussion} \label{sec:randommodels}
	One of the motivations behind our work is to give concrete examples of convergent universal objects like the alternating prime zeta function $P_{-}(s)=\sum_{n\geq1}(-1)^np_n^{-s}$. As we have already proved $P_{-}$ is strongly universal in the critical strip. 
	It is worth mentioning that Theorem~\ref{thm:main} implies that every series of the form  
	\[P_{\chi}(s)=\sum_{n\geq1}\chi_n p_n^{-s},\qquad|\chi_n|=1,\]
	with $\sigma_c(P_{\chi})\leq 0$, is strongly universal in $\{\frac{1}{2}<\Re e<1\}$.
	
	Let us randomize our series. Let $X=(X_n)$ be a sequence of unimodular independent identically distributed Steinhaus or Rademacher (coin tossing) random variables and $P_{X}(s)=\sum_{n\geq1}X_n p_n^{-s}$. Kolmogorov’s three-series theorem \cite[Chapter~5]{MS13} implies that $P_{X}$ converges almost surely in $\mathbb{C}_{\frac{1}{2}}$. To obtain that such series are strongly universal almost surely, we need to obtain more information about their order in the critical strip.
	\begin{proposition}\label{prop:random}
		Let $P_{X}(s)=\sum_{n\geq1}X_n p_n^{-s}$, where $(X_n)$ is as above. Then, $P_{X}$ is of sub-logarithmic order in the critical strip and as a consequence is strongly universal, almost surely.
	\end{proposition}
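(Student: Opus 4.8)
The plan is to verify the two hypotheses needed to invoke Theorem~\ref{thm:bayrearrangement} for the random Dirichlet series $P_X$, namely that $P_X\in\dwa(\sigma_0)\cap\ddens$ with $\sigma_0=\tfrac12$ and $\sigma_a(P_X)=1$ (so that the strip of universality is exactly $\{\tfrac12<\Re e(s)<1\}$), and then to translate ``sub-logarithmic order'' into condition (3) of Definition~\ref{def:dwa}. First I would recall that by Kolmogorov's three-series theorem $P_X$ converges almost surely on $\CC_{1/2}$, so $\sigma_c(P_X)\le\tfrac12$ a.s.; combined with $\sum_n p_n^{-2\sigma}<\infty$ for $\sigma>\tfrac12$ this gives $\sigma_2(P_X)=\tfrac12$ a.s.\ and condition (2) with $\sigma_0=\tfrac12$. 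Condition (5) is immediate since $(\log p_n)$ is $\QQ$-linearly independent (unique factorization). The density condition $P_X\in\ddens$ follows verbatim from the computation in Proposition~\ref{prop:ddens} (the case $P(X)=X$, $\kappa=0$), since $|X_n|=1$ deterministically: the lower bound on $\sum_{\lambda_n\in[x,x+\alpha/x^2]}|X_n|$ uses only the Hadamard--de la Vallée Poussin prime counting estimate and is insensitive to the unimodular twists.

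The substantive point is condition (3), the polynomial order bound $|P_X(\sigma+it)|\le t^B$ for $\sigma\ge\sigma_1>\tfrac12$, $t\ge t_0$, together with the mean square bound (4). The heart of the argument — which the proposition advertises as ``sub-logarithmic order'' — is that almost surely, for every $\sigma>\tfrac12$,
$$\limsup_{|t|\to\infty}\frac{|P_X(\sigma+it)|}{\log|t|}=0,$$
which is far stronger than $t^B$. I would obtain this by a now-standard randomization argument (in the spirit of the work on random Euler products and random ordinary Dirichlet series): fix $\sigma_1>\tfrac12$, truncate $P_X$ at $N=N(t)$ chosen polynomially in $t$, estimate the tail $\sum_{n>N}X_np_n^{-\sigma-it}$ in $L^2$ via Lemma~\ref{lem:MV} (using $\theta_n=\min(\log p_{n+1}-\log p_n,\log p_n-\log p_{n-1})\gtrsim p_n^{-1}$, so that the error term $\sum_n|a_n|^2/\theta_n$ is finite for $\sigma>1$ and controllable after shifting), and for the main term apply a maximal/large-deviation bound — Bernstein's inequality for sums of bounded independent variables (Rademacher case) or Hoeffding-type estimates (Steinhaus case) — to show $\sup_{|t|\le T}|\sum_{n\le N}X_np_n^{-\sigma-it}|\lesssim \sqrt{\log T\cdot\sum_{n\le N}p_n^{-2\sigma}}$ on a $\delta$-net of $[-T,T]$, then pass from the net to all $t$ using that $P_X'$ has a polynomially bounded second moment. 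A Borel--Cantelli argument over $T=2^k$ upgrades this to the almost-sure $\limsup$ statement. This simultaneously yields (3) with any $B>0$ and, after integrating, the uniform mean-square bound (4) on $[\sigma_1,\sigma_2]$.

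Finally, once $P_X\in\dwa(1/2)\cap\ddens$ almost surely, I would invoke Theorem~\ref{thm:bayrearrangement} with $\sigma_0=\tfrac12$ and $\sigma_a(P_X)=1$ to conclude that $P_X$ is strongly universal in $\{\tfrac12<\Re e(s)<1\}$ almost surely. A small care point: Theorem~\ref{thm:bayrearrangement} requires $\sigma_0>\sigma_2(D)$, a strict inequality, whereas here $\sigma_2(P_X)=\tfrac12=\sigma_0$; this is handled exactly as in the remark following Theorem~\ref{thm:bayrearrangement} or by noting that the conditions of $\dwa$ and $\ddens$ in fact hold with $\sigma_0=\tfrac12$ replaced by any $\sigma_0'>\tfrac12$ and taking a countable intersection over $\sigma_0'\downarrow\tfrac12$, which gives universality in every strip $\{\sigma_0'<\Re e(s)<1\}$ and hence in $\{\tfrac12<\Re e(s)<1\}$. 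The main obstacle is the order estimate of the previous paragraph: converting the pointwise large-deviation bound on a net into a genuine supremum bound over the continuum of vertical translates, while keeping the tail error under control uniformly — this is where the Montgomery--Vaughan inequality and the derivative estimate for $P_X$ must be combined carefully, and it is the only place where the randomness (as opposed to mere $|\chi_n|=1$) is used.
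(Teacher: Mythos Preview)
Your overall architecture --- verify $P_X\in\dwa(\sigma_0)\cap\ddens$ for every $\sigma_0>\tfrac12$ and apply Theorem~\ref{thm:bayrearrangement} --- matches the paper, as do the $\ddens$ verification and conditions (1), (2), (5) of Definition~\ref{def:dwa}. You also correctly flag the strict-inequality issue $\sigma_0>\sigma_2(D)$ and resolve it by intersecting over $\sigma_0'\downarrow\tfrac12$.

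For the order estimate, the paper takes a different and shorter route that exploits the multiplicative structure: it writes $P_X=\log\zeta_X-F$, where $\zeta_X(s)=\prod_n(1-X_np_n^{-s})^{-1}$ is the randomized Euler product and $F$ is absolutely convergent in $\CC_{1/2}$, then cites the classical fact (Helson for Steinhaus, Carlson--Wintner for Rademacher) that both $\zeta_X$ and $1/\zeta_X$ converge almost surely in $\CC_{1/2}$, and finally applies the Borel--Carath\'eodory inequality exactly as in the proof that RH implies Lindel\"of to obtain $|P_X(\sigma+it)|=O\big((\log t)^{2-2\sigma+\veps}\big)$. Your concentration/net/Borel--Cantelli route is plausible and has the advantage of not needing the Euler-product structure, but the step ``estimate the tail in $L^2$ via Lemma~\ref{lem:MV}'' does not contribute to a \emph{pointwise} bound on $P_X(\sigma+it)$, so as written the tail control for condition~(3) is incomplete.

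There is also a genuine gap in your derivation of condition~(4). A pointwise bound $|P_X(\sigma+it)|\lesssim(\log t)^C$ only integrates to $\int_1^T|P_X|^2\,dt\lesssim T(\log T)^{2C}$, not $O(T)$; and applying Lemma~\ref{lem:MV} directly to the full series fails because its error term $\sum_n p_n^{-2\sigma}/\theta_n\asymp\sum_n p_n^{1-2\sigma}$ diverges for $\tfrac12<\sigma\le1$. The paper's remedy is to re-run the contour argument of Lemma~\ref{lem:integral} with $X=T^\veps$: the polylogarithmic order just established is precisely what makes the integral over the left vertical segment $\mathcal C_3$ bounded (this is the place where ``zero order'' is actually used), and then Montgomery--Vaughan produces the exponentially damped sum $\tfrac1T\sum_n p_n^{1-2\sigma}e^{-2p_n/T^\veps}=O(T^{-1+\veps(2-2\sigma)})=O(1)$. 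Without this contour step, or an equivalent Carlson-type mean-value theorem, condition~(4) is not established by your outline.
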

	\begin{proof}
		We consider the corresponding randomized zeta functions
		\begin{equation}\label{eulerzeta}
		\zeta_X(s)=\prod_{n\geq 1}\frac{1}{1-X_np_n^{-s}}.
		\end{equation}
		It is easy to see that $\zeta_X$ converges absolutely for $\Re e s>1+\varepsilon,\,\varepsilon>0$. It is also known that $\zeta_X$ and the reciprocal $1/\zeta_X$  converge in $\mathbb{C}_{\frac{1}{2}}$, almost surely. For Steinhaus random variables $ (X_1,X_2,\dots)\in \mathbb{T}\times\mathbb{T}\times\dots$ this can be obtained from the work of Helson \cite{H70} or as an application of Menchoff's theorem \cite{BA02}. In the case of Rademacher random variables $X_n=r_n(t)= \sign(\sin(2\pi2^nt)),\, 0<t<1$ this has been done by Carlson and Wintner \cite{CARLSON,Wintner}.
		
		We set 
		$$F(s,X)=\sum_{n\geq 1}\sum_{k\geq 2}\frac{X_n^k}{k}p_n^{-ks}$$
		which is absolutely convergent in $\CC_{1/2}$ for all $X$. Starting from \eqref{eulerzeta} we get that
		\[\log\zeta_X-P_X=F\text{ in }\mathbb{C}_{1}.\]
		Using the Borel–Carathéodory theorem similarly as in the proof of the implication the Riemann hypothesis implies the Lindelöf hypothesis, \cite[Theorem~4.2]{TITCH}, we obtain that for all $X$ such that $\zeta_X$ and $1/\zeta_X$ both converge in $\CC_{1/2}$, for all $\varepsilon>0$
		\[\left|P_X(\sigma+it)\right|=O\left((\log t)^{2-2\sigma+\varepsilon}\right),\qquad t\rightarrow \infty,\]
		uniformly for $1\geq\sigma\geq\sigma_0>\frac{1}{2}$.
		
		We fix such an $X$. From Example \ref{ex:ddens} we get immediately that $P_{X}$ belongs to $\ddens$. It remains to show that $P_{X}\in \dwa(\frac{1}{2})$. We follow a method introduced in \cite{BaIv00} where the authors estimate the square moments of the logarithm of the zeta function. 
		
		Let $T>2$, let $\sigma_0>1/2$ and write it $\sigma_0=\frac12+2\delta$. Let $\veps>0$ and let us set $A=T^\varepsilon$.  
		The inverse Mellin transform (see \cite[Appendix 3]{MV}) applied to the $\Gamma$ function
		says that, for all $x>0,$
		\begin{equation}\label{eq:invmellin}
		e^{-x}=\frac1{2\pi i}\int_{\frac{3}{2}-\delta-i\infty}^{\frac{3}{2}-\delta+i\infty}x^{-w}\Gamma(w)dw. 
		\end{equation}
		We apply \eqref{eq:invmellin} for $x=\frac{p_n}{A}$, yielding to
		$$\exp\left(-\frac{p_n}{A}\right)=\frac 1{2\pi i}\int_{3/2-\delta-i\infty}^{3/2-\delta+i\infty} p_n^{-w}A^w\Gamma(w)dw.$$
		Therefore, for any $\sigma>\sigma_0$ and any $t\in[0,T],$ setting $s=\sigma+it,$ for any $n\geq 1,$ 
		$$X_{n} p_n^{-s}\exp\left(-\frac{p_n}{A}\right)=
		\frac 1{2\pi i}\int_{\frac{3}{2}-\delta-i\infty}^{\frac{3}{2}-\delta+i\infty}X_{n} p_n^{-s-w}A^w \Gamma(w)dw.$$
		Since $\Re e(s+w)>1$ provided $\Re e(w)=\frac{3}{2}-\delta$ we can sum these equalities and interchange summation and integral to get 
		\[\sum_nX_np_n^{-s}e^{-\frac{p_n}{A}}=\frac{1}{2\pi i}\int_{\frac{3}{2}-\delta-i\infty}^{\frac{3}{2}-\delta+i\infty}P_{X}(s+w)A^w\Gamma(w)\,dw,\]
		
		We introduce the following contour $\mathcal C=\cup_{i=1}^5\mathcal C_i$, defined as the union of five segments or half-lines.
		\begin{figure}[!ht]
			\begin{center}
				\begin{tikzpicture}[line cap=round,line join=round,>=triangle 45,x=0.7cm,y=0.7cm]
				\clip(-4.12,-6.74) rectangle (8.3,7.58);
				\draw[help lines, color=gray!30, dashed] (-4.9,-6.9) grid (6.9,6.9);
				\draw[->,ultra thick] (-5,0)--(7.5,0) node[right]{$x$};
				\draw[->,ultra thick] (0,-7)--(0,7) node[above]{$y$};
				\draw [line width=2.pt] (4.,3.) -- (4.,7.08);
				\draw [line width=2.pt] (4.,-3.) -- (4.,-6.74);
				\draw [line width=2.pt] (4.,3.)-- (-2.,3.);
				\draw [line width=2.pt] (-2.,3.)-- (-2.,-3.);
				\draw [line width=2.pt] (-2.,-3.)-- (4.,-3.);
				\draw [line width=2.pt] (4.,3.)-- (4.,3.843125);
				\draw (0.1,4) node[anchor=north east] {$\log^2(T)$};
				\draw (-1.94,0) node[anchor=north west] {$-\delta$};
				\draw (4.3,5.66) node[anchor=north west] {$\mathcal C_1$};
				\draw (4.14,-4.12) node[anchor=north west] {$\mathcal C_5$};
				\draw (0.82,2.98) node[anchor=north west] {$\mathcal C_2$};
				\draw (-2.88,1.34) node[anchor=north west] {$\mathcal C_3$};
				\draw (0.64,-3.) node[anchor=north west] {$\mathcal C_4$};
				\draw [line width=1.pt] (4,-0.1) -- (4,0.1);
				\draw (3.16,0.1) node[anchor=north west] {$\frac{3}{2}-\delta$};
				\end{tikzpicture}
			\end{center}
		\end{figure} 
		Stirling's formula for the $\Gamma$-function (see again \cite[Appendix 3]{MV}) says that 
		$$|\Gamma(u+iv)|\lesssim e^{-C|v|}$$
		for some $C>0$ (independent of $w=u+iv\in\mathcal C$). This implies that
		\[	\int_{\mathcal C_i}|P_{X}(s+w) A^w \Gamma(w)|\,dw\lesssim 1,\qquad i=1,\,2,\,4,\,5.\]
		To prove that the integral over the line segment $\mathcal C_3$ is bounded note that our function  $P_{X}$ is of zero order uniformly in $\mathbb{C}_{\frac{1}{2}+\delta}$. Since $|\Im m(s+w)|\lesssim T$ for $s=\sigma+it$ with $t\in[0,T]$ and $w\in \mathcal C_3$, we get
		\[	\int_{\mathcal C_3}|P_{X}(s+w) A^w \Gamma(w)|\,dw\lesssim T^{\frac{\varepsilon\delta}{2}-\varepsilon\delta}\lesssim 1.\]
		Let now $\mathcal R$ be the rectangle $\mathcal C_2\cup\mathcal C_3\cup\mathcal C_4\cup\mathcal C_6$ with $\mathcal C_6=[\frac{3}{2}-\delta-i\log^2T,\frac{3}{2}-\delta+i\log^2T]$
		so that 
		\begin{equation}\label{eq:lemint3}
		\int_{\mathcal C}=\int_{\frac{3}{2}-\delta-i\infty}^{\frac{3}{2}-\delta+i\infty}+\int_{\mathcal R}.
		\end{equation}
		The function $w\mapsto P_{X}(s+w)A^w\Gamma(w)$ has a single pole at $0$ in the interior of the curve $\mathcal R,$ 
		with residue $P_{X}(s).$ We apply Cauchy's theorem, yielding to:
		\[P_{X}(s)=\sum_nX_np_n^{-s}e^{-\frac{p_n}{A}}+O(1),\qquad \sigma>\sigma_0,\, t\in[0,T].\]
		By the Montgomery-Vaughan inequality, we obtain
		\begin{align*}
		\frac{1}{T}\int_0^{T}|P_{X}(\sigma+it)|^2dt&\lesssim
		T+\sum_{n\geq 1}p_n^{1-2\sigma}e^{-2\frac{p_n}A}\\
		&\lesssim T+\sum_{n\geq 1} n^{1-2\sigma} e^{-2\frac{n}{A}}\\
		&\lesssim T+ \sum_{n\geq A}n^{1-2\sigma}e^{-2\frac nA}+
		\sum_{n\geq A}n^{1-2\sigma}e^{-2\frac nA}\\
		&\lesssim T+A^{2-2\sigma}+\int_A^{+\infty}x^{1-2\sigma}e^{-2\frac xA}dx\\
		&\lesssim T+A^{2-2\sigma}\lesssim T
		\end{align*}
		if we choose $\veps<1/2$. Therefore $P_{X}\in \dwa(\frac{1}{2})$.
	\end{proof}
	
	\begin{question}
		Is it true that if the series $P_{X}$ converges in $\CC_{1/2}$, then it will be strongly universal in $\{\frac{1}{2}<\Re e<1\}$?
	\end{question}
	
	In view of the proof of Proposition \ref{prop:random}, this question is clearly linked to the order of the Dirichlet series in $\CC_{1/2}$.
	
	\begin{question}
		Let $\alpha>0$. What is the order of $\sum_{n\geq 1}(-1)^n p_n^{-s}$ or of a convergent $P_{X}$ in $\CC_\alpha$ ?
	\end{question}

	\bibliographystyle{amsplain} 
	\bibliography{ref} 
\end{document}